\documentclass{amsart}

\usepackage{amsmath,mathtools,amssymb,amsthm,comment,color,soul}

\newtheorem{thm}{Theorem}
\newtheorem{prop}[thm]{Proposition}
\newtheorem{lem}[thm]{Lemma}
\newtheorem{cor}[thm]{Corollary}
\newtheorem*{open}{Question}

\title[Immersed M\"{o}bius bands and $\mathbb{Z}_2$-homology classes]{Immersed M\"{o}bius bands in knot complements and representatives of $\mathbb{Z}_2$-homology classes}
\author{Mark C. Hughes and Seungwon Kim}

\begin{document}

\begin{abstract}
We study the 3-dimensional immersed crosscap number of a knot, which is a nonorientable analogue of the immersed Seifert genus.  We study knots with immersed crosscap number 1, and show that a knot has immersed crosscap number 1 if and only if it is a nonntrivial $(2p,q)$-torus or $(2p,q)$-cable knot.  We show that unlike in the orientable case the immersed crosscap number can differ from the embedded crosscap number by arbitrarily large amounts, and that it is neither bounded below nor above by the 4-dimensional crosscap number.  We then use these constructions to find, for any $n\geq 2$, an oriented 3-manifold $Y_n$ and class $\alpha_n \in H_2(Y_n;\mathbb{Z}_2)$ such that $\alpha_n$ can be represented by an immersed $\mathbb{RP}^2$, but any embedded representative of $\alpha_n$ has a component $S$ with $\chi(S) \leq 1-n$.  
\end{abstract}

\maketitle

\section{Introduction}

One method to define a measure of the complexity of a knot $K \subset S^3$ is by describing the minimal topological complexity of a compact surface $F$ whose boundary is equal to $K$.  When $F$ is required to be orientable and embedded in $S^3$ this gives the classical \emph{Seifert genus} $g_3(K)$ of $K$; when the requirement that $F$ be embedded in $S^3$ is weakened, and we allow embedded surfaces in $B^4$ we obtain the \emph{slice genus} $g_4(K)$ of $K$.  Here we are thinking of $S^3$ as the boundary of $B^4$.

Loosening the requirement that $F$ be embedded in $S^3$ in another direction we can instead consider surfaces $F$ which are merely immersed in $S^3$.  While it is easy to verify that every knot $K$ is the boundary of an immersed disk in $S^3$, if we consider only immersed surfaces which are embedded along a neighborhood of their boundaries we obtain \emph{immersed Seifert surfaces} for $K$.  The minimal genus of any immersed Seifert surface for $K$ is called the \emph{immersed Seifert genus} of $K$, and is denoted $g_I(K)$.  While $g_I$ takes nontrivial values on knots in $S^3$, using foliations Gabai \cite{gabai1983foliations} proved that the resulting knot invariant is always equal to the Seifert genus.  Both invariants are in turn bounded below by the slice genus.

Dropping the requirement that $F$ be orientable, we can instead consider nonorientable immersed spanning surfaces, defined in a similar way as above.  This gives rise to a nonorientable analogue of the immersed Seifert genus, called the \emph{immersed crosscap number} $\gamma_I(K)$ of the knot $K$.  Our main result involves knots with immersed crosscap number 1, i.e.,\ knots which bound immersed M\"{o}bius bands that are embedded along their boundaries.

For $|p| \geq 2$, we say that $K$ is a \emph{$(p,q)$-cable knot} if it can be isotoped to lie on the boundary of a solid torus $V \subset S^3$ whose core is knotted, where $K$ represents the class $q\cdot m + p \cdot l$ in $\pi_1(\partial V) \cong \mathbb{Z} \oplus \mathbb{Z}$.  Here $m$ and $l$ are the homotopy classes of the meridian and the Seifert-framed longitude of $\partial V$ respectively.

\begin{thm}
\label{thm:cableknots}
A knot $K \subset S^3$ has $\gamma_I (K) =1$ if and only if $K$ is a nontrivial $(2p,q)$-torus or $(2p,q)$-cable knot.
\end{thm}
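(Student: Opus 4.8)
The plan is to prove the two implications separately; the forward direction is a direct construction, while the converse is the substantive half.

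\emph{The ``if'' direction.} Suppose $K$ lies on $\partial V$ for a solid torus $V\subset S^3$ as the curve $q\cdot m + 2p\cdot l$, where $l$ is the core framing and $m$ the meridian of $\partial V$ (for torus knots $V$ is unknotted and $l$ is the Seifert framing; for cables the core of $V$ is knotted). I would build an immersed M\"{o}bius band $M$ by hand: take the core $c$ of $M$ and map it into $V$ so that it wraps $p$ times around the core of $V$ (for instance along a $(p,1)$--curve on an interior torus, or simply as a degree--$p$ immersed loop near the core), then thicken $c$ to a band in $V$ carrying $q$ half--twists relative to the $l$--framing. The boundary of this band wraps $2p$ times longitudinally and $q$ times meridionally, so after an isotopy supported near $\partial V$ its boundary is exactly $K$; the hypothesis $\gcd(2p,q)=1$ guarantees both that $\partial M$ is connected and that the required parity of half--twists is available. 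This $M$ is immersed (when $p=\pm 1$ it is in fact embedded, recovering the $(2,q)$--torus and $(2,q)$--cable knots as the classical crosscap--number--one knots), and it can be taken embedded near $\partial M=K$ by spreading the $2p$ boundary strands to distinct radii inside a collar so that their collars are disjoint even though deeper sheets of $M$ overlap. Hence $\gamma_I(K)\le 1$, and since a nontrivial $(2p,q)$--torus or cable knot is a nontrivial knot, the reduction below gives $\gamma_I(K)\ge 1$, so $\gamma_I(K)=1$.

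\emph{The ``only if'' direction: reductions.} First, $\gamma_I(K)=0$ would force $K$ to bound an immersed disk embedded along its boundary, hence by Dehn's lemma an embedded disk, so $K$ would be trivial; thus $\gamma_I(K)=1$ already implies $K$ nontrivial. Now let $M$ be an immersed M\"{o}bius band with $\partial M=K$, embedded near $\partial M$, and let $W=S^3\setminus \operatorname{int}N(K)$, with meridian $\mu$ and Seifert longitude $\lambda$. Then $M'=M\cap W$ is a M\"{o}bius band properly mapped into $W$ with $\partial M'\subset\partial W$ of slope $\lambda\mu^{k}$ for some $k\in\mathbb{Z}$. Put $M$ into minimal position (least area, or normal with respect to a triangulation); minimality of $\gamma_I$ together with the usual innermost--disk/tower arguments shows $f_*\colon\pi_1(M')\to\pi_1(W)$ is injective, since otherwise the core of $M'$ would be nullhomotopic in $W$ (knot groups being torsion--free), forcing the peripheral element $\lambda\mu^{k}$ to be nullhomotopic, which is impossible for a nontrivial knot. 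Writing $g=f_*(c)$ for the image of the core, the relation $\partial M'\simeq c^{2}$ in $M'$ yields the key identity $g^{2}=\lambda\mu^{k}$ in $\pi_1(W)$; in particular $g$ is not peripheral (a peripheral element has even $\lambda$--exponent when squared), so $M'$ is an essential singular M\"{o}bius band.

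\emph{The ``only if'' direction: extracting the cabling torus, and the main obstacle.} From the essential singular M\"{o}bius band $M'$ (equivalently its orientation double cover, an essential singular annulus) and the annulus theorem for Haken manifolds, $W$ contains an essential embedded annulus or M\"{o}bius band, so by the classification of knots whose exteriors contain such a surface, $K$ is a torus, cable, or composite knot. Composite knots are excluded by the identity $g^{2}=\lambda\mu^{k}$: in the decomposition $\pi_1(W)=G_{K_1}\ast_{\langle\mu\rangle}G_{K_2}$ the element $\lambda\mu^{k}=\lambda_1(\lambda_2\mu^{k})$ is cyclically reduced of syllable length two, hence hyperbolic of translation length two in the Bass--Serre tree, hence not a square. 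So $K$ is an $(a,b)$--torus or $(a,b)$--cable knot sitting on $\partial V$ with cabling slope $ab$, and $W$ has a Seifert--fibered piece $X$ (all of $W$, or the cable space) meeting $\partial W$ with fiber slope $\lambda\mu^{ab}$; the identity then forces $k=ab$ and exhibits $g$ as a square root of the regular fiber $h$. Passing to the base orbifold, the image of $g$ has order dividing $2$, so it is conjugate into a cone group of even order, meaning $X$ has an exceptional fiber of even order $=|a|$. Thus $a$ is even, $a=2p$, $\gcd(2p,q)=1$ with $q=b$, and $K$ is a nontrivial $(2p,q)$--torus or cable knot. I expect the hard part to be precisely this last step: running the annulus theorem while controlling the boundary slope (the singular M\"{o}bius band's boundary is not a peripheral square root, so the resulting embedded annulus must be matched with the genuine cabling slope and shown non--boundary--parallel), disposing of triple points if one argues geometrically, and carrying out the root--of--the--fiber analysis in $X$ to force the cabling index even — in short, converting the immersed M\"{o}bius band into honest embedded data in $S^3$ and reading off the parity, rather than the $\pi_1$--injectivity or the construction, is where the work lies.
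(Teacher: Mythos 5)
Your ``if'' direction and the first two reduction steps track the paper closely: the swept--band construction is the paper's Proposition~\ref{prop:crosscap1}, the $\pi_1$--injectivity argument via the Loop Theorem and the parity of $[\partial M]=2[c]$ is Theorem~\ref{thm:essential}, and passing to the orientation double cover and invoking the annulus theorem (the paper uses Cannon--Feustel) to land in the torus/cable/composite trichotomy is Lemma~\ref{lem:cableortorus}. Your exclusion of composite knots is genuinely different from the paper's and is correct: the paper rules out decomposing spheres by observing that the embedded annulus inherits the longitudinal boundary slope of $h(\partial M)$, whereas you note that $\lambda\mu^{k}=\lambda_1(\lambda_2\mu^{k})$ is cyclically reduced of syllable length two in $G_{K_1}\ast_{\langle\mu\rangle}G_{K_2}$, hence hyperbolic of translation length two in a bipartite Bass--Serre tree, hence not a square. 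That is a nice self-contained alternative.

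The gap is in the last step, which you correctly flag as the crux but whose sketch does not close. Two assertions are unjustified. First, ``the identity forces $k=ab$'': a priori $\lambda\mu^{k}$ is just some longitude, and showing that a peripheral square must have the cabling slope needs either a centralizer argument in $\pi_1(X)/\langle h\rangle$ or, as in the paper, the fact that the annulus theorem can be run so that the resulting embedded cabling annulus has boundary parallel to $h(\partial M)$. Second, and more seriously, ``passing to the base orbifold, the image of $g$ has order dividing $2$'' presumes that the square root $g$ is conjugate into the Seifert piece $\pi_1(X)$. For a cable knot $\pi_1(W)=\pi_1(X)\ast_{\mathbb{Z}^2}\pi_1(E(K_0))$, and since the regular fiber $h$ also lies in the edge group $\pi_1(\partial E(K_0))$, its square root could a priori lie in the companion group $\pi_1(E(K_0))$: homology only forces the meridional coordinate of $h$ on $\partial E(K_0)$ to be even, which is perfectly consistent with the winding number $a$ being odd. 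Geometrically this is exactly the case in which the M\"{o}bius band sits in the companion exterior, and the paper has to treat it separately (Lemma~\ref{lem:outside}: an essential embedded annulus or M\"{o}bius band in $E(K_0)$ with that slope forces $p=\pm1$, contradicting $|p|\geq 2$); the complementary case, where an innermost sub-M\"{o}bius band lies in the cabled solid torus, is handled by untwisting and reducing to the torus-knot case (Lemmas~\ref{lem:inessentialloops}--\ref{lem:boundaryparallelintersections} and the proof of Proposition~\ref{prop:cableknot}). Your square-root-of-the-fiber orbifold argument does correctly dispose of the torus-knot case, where $X=W$ and there is no companion to hide in, and is a reasonable substitute for the paper's word-length parity argument there; but for cables you still need an argument localizing $g$, and that is where most of the paper's Section~\ref{sec:cable} lives.
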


In a similar way, we can also define the \emph{3-dimensional (embedded) crosscap number} $\gamma_3(K)$ and \emph{4-dimensional (embedded) crosscap number} $\gamma_4(K)$ of a knot, which are nonorientable analogues of the Seifert and slice genus respectively.  Theorem~\ref{thm:cableknots} then generalizes a result of Clark \cite{clark1978crosscaps}, who proved that $\gamma_3 (K) = 1$ if and only if $K$ is a $(2,q)$-torus or $(2,q)$-cable knot.

Unlike their orientable counterparts, in general $\gamma_I (K)$ may not equal $\gamma_3(K)$, and is not bounded below by $\gamma_4(K)$.  More precisely, we present an infinite family of immersed crosscap number 1 knots with unbounded 3- and 4-dimensional crosscap numbers.  Furthermore, we also present examples of knots $K$ with $\gamma_I(K) > \gamma_4(K)$.


We also highlight the difference between the topological complexity of immersed and embedded nonorientable surfaces by studying immersed and embedded representatives of certain homology classes in 3-manifolds.  It is well-known that when $Y$ is an oriented 3-manifold, any class in $H_2 (Y;\mathbb{Z})$ can be represented by a closed embedded orientable surface.  Gabai's work implies that the minimum topological complexity for a representative of a given $\alpha \in H_2(Y;\mathbb{Z})$ (as measured by the Thurston norm) is the same whether the surface representing $\alpha$ is required to be embedded or merely immersed.

It can similarly be shown that any class in $H_2 (Y;\mathbb{Z}_2)$ can also be represented by a closed embedded surface.  In contrast to the orientable case, however, the minimal genus of immersed and embedded representatives of a given class can differ by arbitrarily large amounts.  This fact is illustrated by the following theorem, which is proven in Section~\ref{sec:homology}.
\begin{thm}
\label{thm:homology}
For every integer $n \geq 2$ there exists a closed oriented 3-manifold $Y_n$ and homology class $\alpha_n \in H_2 (Y_n ; \mathbb{Z}_2)$ such that 
\begin{enumerate}
\item $\alpha_n$ can be represented by an immersed $\mathbb{RP}^2$, and 
\item any embedded representative of $\alpha_n$ has a component $S$ with $\chi(S) \leq 1-n$.
\end{enumerate}
\end{thm}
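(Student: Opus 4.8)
The plan is to realize $Y_n$ as Dehn surgery on one of the immersed crosscap number $1$ knots supplied by the family discussed after Theorem~\ref{thm:cableknots}. Fix a knot $K$ with $\gamma_I(K)=1$; since that family has $\gamma_3(K)\to\infty$, we may choose $K$ with $\gamma_3(K)\ge n+1$. Let $M\subset S^3$ be an immersed M\"{o}bius band with $\partial M=K$ that is embedded near $\partial M$, and let $w$ be the framing of $K$ induced by $M$. Writing $X$ for the exterior of $K$, the surface $M\cap X$ is an immersed M\"{o}bius band properly immersed in $X$, embedded near its single boundary curve $c\subset\partial X$, and $c$ has slope $w$. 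Set $Y_n:=S^3_w(K)=X\cup_\phi V'$, an oriented closed $3$-manifold, where $V'$ is the surgery solid torus glued so that its meridian disk $D'$ satisfies $\partial D'=c$. Gluing $D'$ to $M\cap X$ along $c$ produces an immersed surface $R_n\subset Y_n$ that is closed, nonorientable, and has $\chi(R_n)=\chi(M\cap X)+\chi(D')=0+1=1$, hence is an immersed $\mathbb{RP}^2$; put $\alpha_n:=[R_n]\in H_2(Y_n;\mathbb{Z}_2)$. This gives (1). Since $D'$ meets the core $C'$ of $V'$ transversally in one point while $M\cap X$ is disjoint from $C'$, we get $\alpha_n\cdot[C']=1$, so $\alpha_n\ne 0$; and since the normal Euler number of a M\"{o}bius band is even and nonzero, $H_2(Y_n;\mathbb{Z}_2)\cong\mathbb{Z}_2=\langle\alpha_n\rangle$, with no integral class reducing to $\alpha_n$, so that every embedded representative of $\alpha_n$ is nonorientable.

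For (2), let $S\subset Y_n$ be an embedded representative of $\alpha_n$. From $\alpha_n\cdot[C']=1$ there is a component $S_0$ with $[S_0]\cdot[C']=1$, and I claim $\chi(S_0)\le 1-n$. Make $S_0$ transverse to $T=\partial X$ and remove all trivial circles of $S_0\cap T$ by isotopies and compressions, at each compression keeping the piece that still meets $C'$ oddly — a step that never decreases the Euler characteristic of the surviving component, since the discarded piece is a closed surface of Euler characteristic at most $2$. Because $[\,S_0\cap V',\partial(S_0\cap V')\,]\ne 0$ in $H_2(V',\partial V';\mathbb{Z}_2)$, a standard simplification of $S_0\cap V'$ inside the solid torus (again keeping the relevant component) turns it into an odd number of meridian disks of $V'$; hence $S_0\cap X$ is a properly embedded surface $G\subset X$ whose boundary is an odd number of parallel framed pushoffs of $K$. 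Tubing these boundary circles together by annuli in $T$ — an operation that does not change the Euler characteristic — produces a spanning surface for $K$ in $S^3$, necessarily nonorientable (a nonorientable surface stays nonorientable under these moves). Its crosscap number is therefore at least $\gamma_3(K)\ge n+1$, so it has Euler characteristic at most $-n$, and unwinding the modifications gives $\chi(S_0)\le 1-n$.

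The delicate point — and the main obstacle — is the Euler-characteristic bookkeeping in this last step when $S_0\cap T$ has several essential curves: $S_0$ is then assembled from $G$ and an odd number $m$ of meridian disks, and one must check that the complexity forced on $G$ by carrying $m$ parallel copies of $K$ as its boundary (rather than a single copy, as for a minimal spanning surface) compensates for the Euler characteristic contributed by the extra disks, so that $\chi(S_0)\le 1-n$ survives independently of $m$. Concretely, one wants a bound of the shape $\chi(G)\le 2-\gamma_3(K)-(m-1)$ for a connected nonorientable $G\subset X$ with $m$ parallel boundary copies of $K$, which can be approached by band-summing the boundary copies along arcs in $G$ and controlling the resulting knot types. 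Alternatively, if $K$ is chosen to be a torus knot then $Y_n$ is Seifert fibered, and one can bound the complexity of an arbitrary embedded representative of $\alpha_n$ directly using the classification of incompressible surfaces; or, taking $w=0$ so that $Y_n=S^3_0(K)$ fibers over $S^1$, one can apply Thurston's Euler-class inequality for the resulting taut foliation, in a form valid for one-sided surfaces, to force $-\chi(S_0)$ to grow with $g_3(K)$ and hence with $n$. Establishing (1) is routine; it is this complexity lower bound, and in particular its robustness with respect to the way $S$ meets the surgery torus, that requires the real work.
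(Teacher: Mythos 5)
Part (1) of your proposal is essentially the paper's construction: the same family of immersed--crosscap--number--one torus knots, surgered along the slope of the spanning M\"obius band (which for $T(2n,2n-1)$ coincides with the cabling slope $2n(2n-1)$ used in the paper), with the capped-off band giving the immersed $\mathbb{RP}^2$. The problem is part (2), and you have diagnosed your own gap correctly: after normalizing $S_0\cap V'$ to $m$ meridian disks you get $\chi(S_0)=\chi(G)+m$, and the crosscap-number bound $\chi(G)\le 1-\gamma_3(K)$ obtained by tubing the $m$ boundary circles into one only yields $\chi(S_0)\le 1-n+(m-1)$, which fails for $m\ge 3$. The compensating inequality you posit, $\chi(G)\le 2-\gamma_3(K)-(m-1)$, is neither proved nor obviously true (for instance, nothing in your argument rules out $G$ being built from cheap pieces each carrying several parallel boundary circles), and the alternative routes you sketch (Seifert-fibered surface classification, Thurston-norm/foliation bounds for one-sided surfaces) are not carried out. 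So the heart of the theorem --- the lower bound on the complexity of embedded representatives --- is not established.

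The paper closes exactly this gap by a different endgame. It decomposes $E(K)$ further along the cabling annulus $A$, and uses re-embedding/twisting arguments (feeding the same surface into infinitely many torus knots of growing genus and crosscap number, via Teragaito's computation) to show first that $S$ meets $A$ in an even number of essential curves, and second that the relevant piece in the inner solid torus $V$ is nonorientable. The multiplicity problem is then absorbed in two steps: (i) the component $S_1$ of $S\cap V$ with an odd number of boundary circles satisfies $\chi(S_1)\ge\chi(S)$, because every complementary piece of $S$ has non-disk boundary and hence non-positive Euler characteristic; and (ii) gluing a solid torus to $\partial V$ produces the lens space $L(2n,2n-1)$, where all $d$ boundary circles of $S_1$ are capped by disks, so that $\chi$ \emph{increases} by at least $1$ regardless of $d$, and the Bredon--Wood bound $\chi\le 2-n$ for closed nonorientable surfaces in $L(2n,2n-1)$ delivers $\chi(S)\le 1-n$ uniformly in $m$ and $d$. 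In other words, the extra boundary components that defeat your tubing argument actually help in the lens-space argument, because each one costs a capping disk. If you want to salvage your outline, you would need either to prove your proposed $m$-dependent inequality for surfaces in $E(K)$ with parallel boundary, or to import an ambient bound of Bredon--Wood type as the paper does.
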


\section{The immersed crosscap number of a knot}
\label{sec:immersedcrosscap}

\subsection{Crosscap numbers of knots} We begin by defining nonorientable analogues of the Seifert, slice, and immersed Seifert genera of knots.  Roughly speaking, these values capture the minimum $k$ needed to span the knot by a punctured connected sum of $k$ copies of $\mathbb{RP}^2$, assuming different embedding and immersion requirements. 

Let $K$ be a knot in $S^3$.  We say that a compact, embedded, nonorientable surface $F \subset S^3$  with $\partial F = K$ is a \emph{nonorientable spanning surface} for $K$, and we define the \emph{3-dimensional (embedded) crosscap number} of a nontrivial knot $K$ to be 
\[
\gamma_3(K) = \min \left\{ b^1(F) \, | \, F \text{ is a nonorientable spanning surface for } K \right\}.
\] 
Here $b^1(F)$ is the first Betti number of $F$.  We define $\gamma_3$ of the unknot to be $0$.

If we think of $S^3 = \partial B^4$, then a compact, embedded, nonorientable surface $F \subset B^4$  with $\partial F = K \subset S^3$ is a \emph{nonorientable slice surface} for $K$, and we define the \emph{4-dimensional (embedded) crosscap number} of a nonslice knot $K$ (i.e. $g_4(K) \geq 1$) to be 
\[
\gamma_4(K) = \min \left\{ b^1(F) \, | \, F \text{ is a nonorientable slice surface for } K \right\}.
\] 
If $K$ bounds an embedded slice disk in $B^4$ we defined $\gamma_4(K) = 0$.

Lastly, suppose that $F$ is the image of an immersion $h : \Sigma \rightarrow S^3$, where $\Sigma$ is a compact, nonorientable surface with boundary.  Then $F = h(\Sigma)$ is a \emph{nonorientable immersed spanning surface} for $K$ if $h(\partial \Sigma)=K$, and if there is a collar neighborhood $A$ of the boundary $\partial \Sigma$, such that $h(A)$ is embedded, and $h^{-1}(h(A))=A$.  The first Betti number $b^1(F)$ of the nonorientable immersed spanning surface $F$ is defined to be $b^1(\Sigma)$, and if $K$ is a nontrivial knot we define the \emph{nonorientable immersed crosscap number} of $K$ to be 
\[
\gamma_I(K) = \min \left\{ b^1(F) \, | \, F \text{ is a nonorientable immersed spanning surface for } K \right\}.
\]
In the case when $K$ is the unknot, we again define $\gamma_I(K) = 0$.

Recall that in the orientable case, the Seifert, slice, and immersed Seifert genus of a knot $K$ satisfy
\[
g_I(K) = g_3(K) \geq g_4(K).
\]
Our goal in this section is to determine which of the following results above generalize to the nonorientable case.  

Firstly, as any nonorientable spanning surface in $S^3$ can be pushed into $B^4$ to become a nonorientable slice surface, we clearly have that $\gamma_3 (K) \geq \gamma_4(K)$.  Furthermore, we also trivially have  that $\gamma_3 (K) \geq \gamma_I (K)$.

Note, however, that not every nonorientable immersed spanning surface can be pushed to an embedding in $B^4$ (see \cite{carter1998surfaces} for criteria describing when this is possible).  Furthermore, not every nonorientable slice surface can be pushed into $S^3$ to give a nonorientable immersed spanning surface.  Hence, we do not have any a priori relations between the invariants $\gamma_I(K)$ and $\gamma_4(K)$, a fact which can be illustrated with a few simple examples.

In \cite{teragaito2004crosscap} Teragaito gives and algorithm for computing the crosscap number of the $(p,q)$-torus knot $T(p,q)$ using the partial fraction expansions of rational expressions involving $p$ and $q$.  In the case of $T(2k,2k-1)$, where $k\geq 2$, his results give $\gamma_3 (T(2k,2k-1)) = k$.

On the other hand Batson \cite{batson2012nonorientable} finds a lower bound on $\gamma_4(K)$ involving the signature of $K$, along with the Heegaard-Floer $d$-invariant of certain integer homology spheres.  In the case of $T(2k,2k-1)$, again with $k\geq2$, his results specialize to give $\gamma_4 (T(2k,2k-1)) = k-1$.

\begin{prop}
\label{prop:crosscap1}
Let $p$ and $q$ be integers, with $2p$ and $q$ coprime, and $|q| \geq 2$.  Then $\gamma_I (T(2p,q)) = 1$.
\end{prop}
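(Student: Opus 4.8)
The bound $\gamma_I(T(2p,q)) \ge 1$ costs nothing: coprimality of $2p$ and $q$ forces $q$ to be odd and $|2p|\ge 2$, so $T(2p,q)$ is a nontrivial knot, and every nonorientable immersed spanning surface is modelled on a nonorientable surface with boundary, which has first Betti number at least $1$. So the real task is to produce an immersed M\"obius band spanning $T(2p,q)$, that is, a nonorientable immersed spanning surface $F$ with $b^1(F)=1$.

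The plan is to give such a M\"obius band by an explicit immersion. Fix a standard solid torus $V=S^1\times D^2\subset S^3$ with unknotted core $U=S^1\times\{0\}$, and realize $T(2p,q)$ on $\partial V$ as the $(2p,q)$-curve. Writing $S^1=\mathbb R/2\pi\mathbb Z$, set $h(\theta,t)=\bigl(p\theta \bmod 2\pi,\ \tfrac{t}{2}e^{\mathrm i q\theta/2}\bigr)$; since $q$ is odd, $h$ factors through the identification $(\theta,t)\sim(\theta+2\pi,-t)$, whose quotient of $(\mathbb R/4\pi\mathbb Z)\times[-1,1]$ is a M\"obius band $\Sigma$ (for even $q$ it would be an annulus), and one checks directly that $h$ is an immersion. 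Its core $t=0$ maps $p$-to-$1$ onto $U$, so the core of $\Sigma$ is the unknot traversed $p$ times, while the boundary $t=\pm1$ winds $2p$ times longitudinally and $q$ times meridionally around $U$ and hence lies on a torus parallel to $\partial V$ as the $(2p,q)$-curve; thus $h(\partial\Sigma)=T(2p,q)$, and $T(2p,q)$ is a double cover of the multiplicity-$p$ unknot.

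It then remains to verify the defining conditions for a nonorientable immersed spanning surface, namely that some collar $A$ of $\partial\Sigma$ has $h|_A$ injective with $h^{-1}(h(A))=A$; this is where $\gcd(2p,q)=1$ does its work. The key assertion is that all self-intersections of $h$ lie on the core: a coincidence $h(\theta_1,t_1)=h(\theta_2,t_2)$ forces $p\theta_1\equiv p\theta_2$ and $t_1=\pm t_2$, and then the meridional equation becomes a congruence modulo $2p$ on the integer $m$ with $\theta_1-\theta_2=2\pi m/p$, whose only solutions in the relevant range (here $\gcd(2p,q)=1$ enters) are $m=0$, which gives the diagonal, and $m=\pm p$, which is precisely the M\"obius identification $(\theta,t)\sim(\theta+2\pi,-t)$; the remaining coincidences require $t_1=t_2=0$. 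Since the core avoids the boundary, a thin collar $A=\{|t|\ge 1-\varepsilon\}$ is embedded, and any point mapping into $h(A)$ must itself have $|t|\ge 1-\varepsilon$, so $h^{-1}(h(A))=A$. As $b^1(\Sigma)=1$ this yields $\gamma_I(T(2p,q))\le 1$. I expect the only steps needing genuine care to be checking that $h$ is an immersion and the congruence bookkeeping behind the self-intersection claim (note that for $p=1$ the argument degenerates correctly, recovering the classical \emph{embedded} M\"obius band for $T(2,q)$); everything else is immediate from the explicit formula.
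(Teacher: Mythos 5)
Your construction is exactly the paper's: the map $h(\theta,t)=(p\theta,\tfrac{t}{2}e^{iq\theta/2})$ is a coordinate description of sweeping $p$ diameters of each meridian disk of the solid torus, producing an immersed M\"obius band whose only self-intersections lie over the core. The argument is correct and follows the same approach, with the added (welcome) care of verifying the immersion and embedded-collar conditions that the paper leaves implicit.
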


\begin{proof}
With $p$ and $q$ as above, the knot $K=T(2p,q)$ is nontrivial, and hence $\gamma_I (K) \geq 1$.  Furthermore, it is not difficult to construct an immersed M\"{o}bius band $F$ whose boundary is $K$ as follows.  

Suppose that $K$ is embedded along the boundary of the standard embedding of the solid torus $V \subset S^3$.  Suppose that for any disk $D_\theta \subset V$ of the form $\left\{ \theta = \text{constant} \right\} \cap V$, where $\theta$ is the angular polar coordinate, $K \cap D_\theta$ is a collection of $2p$ evenly-spaced points $\left\{x_1, \ldots, x_{2p} \right\}$ around $\partial D_\theta$.  In each $D_\theta$, draw $p$ straight lines through the center of $D_\theta$, connecting $x_j$ with $x_{p+j}$ for $1\leq j \leq p$.  As $\theta$ ranges from $0$ to $2\pi$ these lines will sweep out an immersed M\"{o}bius band in $V$, with boundary $K$ (see Figure~\ref{fig:immersedmobius}).  Furthermore, this M\"{o}bius band will be embedded away from the core of $V$.
\end{proof}

\begin{figure}
    \centering
    \includegraphics[width=0.4\textwidth]{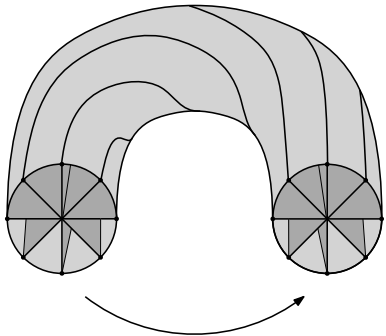}
    \caption{An immersed M\"{o}bius band in a solid torus with boundary $T(2p,q)$.}
    \label{fig:immersedmobius}
\end{figure}

Clearly the above proof of Proposition~\ref{prop:crosscap1} generalizes to $(2p,q)$-cable knots, a fact which we record here.   
\begin{prop}
If $K$ is a $(2p,q)$-cable knot, then $\gamma_I(K)=1$.
\end{prop}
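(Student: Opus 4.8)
The plan is to prove the two inequalities $\gamma_I(K) \ge 1$ and $\gamma_I(K) \le 1$ separately, the second being essentially a verbatim transcription of the construction in the proof of Proposition~\ref{prop:crosscap1}.

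For the lower bound, recall that by definition a $(2p,q)$-cable knot lies on the boundary of a solid torus $V \subset S^3$ with knotted core and represents $q\cdot m + 2p\cdot l$ with $|2p|\geq 2$. Such a $K$ is a nontrivial satellite of the (knotted) core of $V$, hence is knotted: $\partial V$ is an incompressible torus in $S^3 \setminus N(K)$, since it does not compress into $V$ (as $|2p| \geq 2$, the curve $K$ is essential in $V$) and it does not compress on the outside (the core of $V$ is knotted, so $S^3 \setminus V$ is not a solid torus). Thus $S^3 \setminus N(K)$ is not a solid torus, so $K$ is nontrivial and $\gamma_I(K) \geq 1$.

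For the upper bound I would observe that the construction in the proof of Proposition~\ref{prop:crosscap1} takes place entirely inside the solid torus $V$ and uses only a product structure $V \cong D^2 \times S^1$ together with the fact that $K$ meets each meridian disk $D_\theta = D^2 \times \{\theta\}$ in $2p$ points. Neither ingredient requires $V$ to be unknottedly embedded: every solid torus admits such a product structure, and the class $q\cdot m + 2p\cdot l$ meets each meridian disk algebraically---hence, after isotoping $K$ on $\partial V$ to a straight curve of the appropriate slope, geometrically---in $2p$ points, which we may take evenly spaced on $\partial D_\theta$. Drawing the $p$ straight chords joining $x_j$ to $x_{p+j}$ in each $D_\theta$ and sweeping $\theta$ over $[0,2\pi]$ then produces, exactly as in Figure~\ref{fig:immersedmobius}, an immersed M\"{o}bius band $F \subset V \subset S^3$ with $\partial F = K$ that is embedded away from the core of $V$. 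Since $\partial F = K$ lies on $\partial V$, far from the core, $F$ is embedded on a collar $A$ of $\partial F$ with $h^{-1}(h(A)) = A$, so $F$ is a nonorientable immersed spanning surface for $K$ with $b^1(F) = 1$; hence $\gamma_I(K) \leq 1$, and therefore $\gamma_I(K) = 1$.

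Because every step mirrors the torus-knot argument, there is no real obstacle here; the two points that merit explicit attention are the genuine knottedness of $K$ (needed for $\gamma_I(K) \geq 1$) and the choice of product coordinates on $V$ adapted to $K$, the latter following from the standard fact that a simple closed curve on $\partial V$ can be isotoped to a straight curve of its slope. As in Proposition~\ref{prop:crosscap1}, connectedness and nonorientability of $F$ rely on $\gcd(2p,q) = 1$, which is built into the hypothesis that $K$ is a knot.
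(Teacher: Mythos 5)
Your proof is correct and follows exactly the route the paper takes: the paper's entire argument for this proposition is the one-line observation that the sweep-out construction of Proposition~\ref{prop:crosscap1} works verbatim inside the knotted solid torus $V$, which is precisely what you verify (together with the routine check that a cable knot is nontrivial, so $\gamma_I \geq 1$). No issues.
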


In particular, for $k\geq 2$, we have $\gamma_I (T(2k,2k-1)) = 1$.  We thus see that both of the quantities $\gamma_3(K)-\gamma_I(K)$ and $\gamma_4(K)-\gamma_I(K)$ can be arbitrarily large.  On the other hand, we can also have $\gamma_I (K)> \gamma_4 (K)$.  Indeed, an immediate corollary to Theorem~\ref{thm:cableknots} is that any hyperbolic knot $K$ has $\gamma_I (K)>1$.  Hence, any slice hyperbolic knot $K$ has $\gamma_I (K)> \gamma_4 (K)$.  The Stevedore's knot $6_1$ is the simplest example of such a knot.  An interesting question would be to ask whether the value of $\gamma_I(K) - \gamma_4(K)$ can be arbitrarily large.

\begin{open}
For any $n \in \mathbb{N}$ does there exist a knot $K$ such that $\gamma_I(K) - \gamma_4(K) > n$?
\end{open}

\section{Essential M\"{o}bius bands}
\label{sec:essential}

In what follows we focus on studying nontrivial knots $K$ which bound immersed M\"{o}bius bands that are embedded near their boundaries, i.e.\ knots with  $\gamma_I (K) = 1$.  To do so it will be more useful to think of our immersed nonorientable spanning surfaces as lying in the exterior of $K$, rather than $S^3$ itself.

More precisely, let $N(K) \subset S^3$ be a small open tubular neighborhood of $K$, and let $E(K) = S^3 \backslash N(K)$ be the exterior of $K$.  Then $\partial E(K)$ is a torus, with a canonical choice of meridian.  An \emph{immersed (nonorientable) spanning surface} $F$ in $E(K)$ is the image of an immersion $h:\Sigma \rightarrow E(K)$, where $\Sigma$ is a (nonorientable) compact surface with boundary, such that $h(\partial \Sigma) = F \cap \partial E(K)$ is a longitude on $\partial E(K)$, and $\partial \Sigma$ has a collared neighborhood $A$ in $\Sigma$ such that $h(A)$ is embedded and $h^{-1}(h(A))=A$.  Clearly there is a straightforward way to pass between nonorientable immersed spanning surfaces for the knot $K$ in $S^3$, and nonorientable immersed spanning surfaces in the knot exterior $E(K)$.  Furthermore, a nontrivial knot $K$ has $\gamma_I (K) = 1$ if and only if there is a spanning surface in $E(K)$ which is the immersed image of a M\"{o}bius band.

\subsection{Essential maps}
\label{sec:essentialmaps}

Consider a map $h: \Sigma \rightarrow E(K)$ with $h(\partial \Sigma) \subset \partial E(K)$.  We say that $h$ is \emph{$\pi_1$-essential} if $h_*: \pi_1(\Sigma) \rightarrow \pi_1(E(K))$ is injective. Similarly, we say that $h$ is \emph{$\partial\pi_1$-essential} if $h_* : \pi_1(\Sigma, \partial \Sigma) \rightarrow \pi_1(E(K), \partial E(K))$ is injective.  Finally, we say that $h$ is \emph{essential} if it is both $\pi_1$- and $\partial \pi_1$-essential.  We will sometimes describe the image of an embedding $h:\Sigma \rightarrow E(K)$ as being \emph{essential}, if the map $h$ is essential.

Now let $K \subset S^3$ be a nontrivial knot, and $M$ a M\"{o}bius band.  To make things more precise, let $M$ be given by the square $[-1,1] \times [-1,1]$ in $\mathbb{R}^2$, with vertical edges identified via $(-1,t) \sim (1,-t)$.  The \emph{core} of $M$ will be denoted by $c$, and is the image of $[-1,1]\times \{ 0\}$ under the quotient map.  Let $\alpha$ be the image of the arc $\left\{0\right\} \times [-1,1]$ under the quotient map.  Note that the homotopy class of $c$ generates $\pi_1(M) \cong \mathbb{Z}$, and the homotopy class of $\alpha$ is the only nontrivial class in $\pi_1(M, \partial M)$.

We fix a meridian $m$ and longitude $l$ of $\partial E(K)$, which we think of as generators for $\pi_1(\partial E(K)) \cong \mathbb{Z} \oplus \mathbb{Z}$.  (We will write the group operation of $\pi_1(\partial E(K))$ as addition because it is abelian, and suppress explicit reference to a basepoint when there is no danger in doing so.)

\begin{thm}
\label{thm:essential}
Let $K$ be a nontrivial knot, and suppose that $h:M \rightarrow E(K)$ is a proper map where $h(\partial M)$ is homotopic in $\partial E(K)$ to a curve of the form $a \cdot m + b\cdot l$, for some $a,b \in \mathbb{Z}$ with either $a$ or $b$ odd.  Then $h$ is essential.  
\end{thm}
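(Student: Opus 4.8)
The plan is to reduce \emph{both} halves of the assertion --- that $h$ is $\pi_1$-essential and that it is $\partial\pi_1$-essential --- to a single algebraic identity in $\pi_1(E(K))$ together with a parity count. Recall that $\pi_1(M)\cong\mathbb{Z}$ is generated by the class of the core $c$, that the boundary circle $\partial M$ represents $c^{\pm 2}$ (so, after possibly reversing orientation, $c^{2}$), and that $\pi_1(M,\partial M)\cong\mathbb{Z}/2\mathbb{Z}$, with the nontrivial class represented by $\alpha$ and also by the image of $c$ under the natural map $\pi_1(M)\to\pi_1(M,\partial M)$. Since $K$ is nontrivial, $\partial E(K)$ is incompressible, so the inclusion-induced map $i_*\colon \pi_1(\partial E(K))=\mathbb{Z}m\oplus\mathbb{Z}l\to\pi_1(E(K))$ is injective. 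Choose a basepoint $p\in\partial M$ and set $q=h(p)\in\partial E(K)$, and write $g=h_*(c)\in\pi_1(E(K),q)$. Since $h$ is proper, $h|_{\partial M}$ is a loop in $\partial E(K)$ freely homotopic to $am+bl$, so based at $q$ it represents $am+bl\in\pi_1(\partial E(K),q)$; hence the key identity $g^{2}=h_*(c^{2})=h_*[\partial M]=i_*(am+bl)$ holds in $\pi_1(E(K),q)$.

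For $\pi_1$-essentiality: since $a$ or $b$ is odd we have $(a,b)\neq(0,0)$, so $am+bl$ has infinite order in the torsion-free group $\pi_1(\partial E(K))$, hence $i_*(am+bl)=g^{2}$, and therefore $g$ itself, has infinite order in $\pi_1(E(K))$. Thus $h_*\colon\pi_1(M)=\mathbb{Z}\langle c\rangle\to\pi_1(E(K))$ sends a generator to an element of infinite order, so it is injective.

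For $\partial\pi_1$-essentiality: since $\pi_1(M,\partial M)\cong\mathbb{Z}/2\mathbb{Z}$ it suffices to show $h_*[\alpha]\neq 0$, so suppose instead $h_*[\alpha]=0$. By naturality of the homotopy exact sequence of a pair applied to $h\colon(M,\partial M)\to(E(K),\partial E(K))$, the element $g$ lies in the kernel of $\pi_1(E(K))\to\pi_1(E(K),\partial E(K))$, which by exactness equals the image of $i_*$; write $g=i_*(xm+yl)$. Squaring and comparing with the key identity gives $i_*(2x\,m+2y\,l)=g^{2}=i_*(am+bl)$, so injectivity of $i_*$ forces $a=2x$ and $b=2y$, contradicting that $a$ or $b$ is odd. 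Hence $h$ is essential. Note that only properness of $h$ is used, not that it is an immersion. The steps I expect to require the most care are the precise identification $\pi_1(M,\partial M)\cong\mathbb{Z}/2\mathbb{Z}$ with the core mapping onto the generator, and the basepoint bookkeeping that turns the ``key identity'' into an honest equality of group elements rather than merely of conjugacy classes; incompressibility of $\partial E(K)$ and the parity count are then routine.
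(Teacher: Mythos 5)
Your proof is correct, and it reaches the conclusion by a more algebraic route than the paper does. The deep input is the same in both arguments --- the Loop Theorem, which you invoke in the packaged form ``$\partial E(K)$ is incompressible for nontrivial $K$,'' and which the paper applies directly twice, first to the singular disk obtained by capping off a neighborhood of the core when $h_*(c)$ is assumed trivial, and again to the disk $D_0 = M\setminus N(\alpha)$ in the relative case. Likewise the parity obstruction ($\partial M \simeq 2c$, so $a\cdot m + b\cdot l$ would have to be twice a class) is the engine of both proofs. What you do differently is to extract a single based identity $g^2=i_*(a\cdot m+b\cdot l)$ and then let the long exact sequence of the pair $(E(K),\partial E(K))$ do the work: $\pi_1$-essentiality falls out of an order count (note your version does not even need torsion-freeness of the knot group, which the paper uses to reduce to $h_*(c)\neq 1$), and $\partial\pi_1$-essentiality follows because $h_*[\alpha]=0$ forces $g\in\operatorname{im}(i_*)$, making both $a$ and $b$ even. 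This replaces the paper's geometric case analysis (essential versus inessential $h_0(\partial D_0)$, and the construction of the auxiliary map $h_1:M\to\partial E(K)$) with exactness and naturality, at the cost of the basepoint bookkeeping you rightly flag; since $\pi_1(\partial E(K))$ is abelian and $\pi_1(M,\partial M)$ has only one nontrivial class, that bookkeeping is harmless. The only items worth writing out carefully if you formalize this are (i) the standard deduction that a compressing disk for $\partial E(K)$ would have longitudinal boundary and hence unknot $K$, and (ii) the identification of $\ker\bigl(\pi_1(E(K))\to\pi_1(E(K),\partial E(K))\bigr)$ with $\operatorname{im}(i_*)$ as exactness of pointed sets.
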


\begin{proof}
We begin by showing that $h$ is $\pi_1$-essential.  Note that since $\pi_1 (E(K))$ is torsion-free, it suffices to show that $h_* (c)$ is nontrivial.  Suppose then to the contrary that $h_*(c)$ is null-homotopic.

Consider a tubular neighborhood $N(c) \subset M$ of $c$.  Notice that $\partial N(c)$ is a double of the core, and hence $h(\partial N(c))$ is also null-homotopic in $E(K)$.  Hence we can take a disk $D$ and glue it to $M\backslash N(c)$, via some identification $\varphi$ of $\partial D$ with $\partial N(c)$, and then extend the map $h$ across $D$ to get a map $h': M \backslash N(c) \cup_\varphi D \rightarrow E(K)$.  Note, however, that $M \backslash N(c) \cup_\varphi D$ is homeomorphic to a disk $D^2$, and hence we obtain a map $h':D^2 \rightarrow E(K)$, with $h'(\partial D^2) = h(\partial M)$.  As $h(\partial M)$ is nontrivial in $\pi_1(\partial E(K))$, the Loop Theorem then implies that there is a properly embedded disk in $D' \subset E(K)$, with $\partial D'$ nontrivial in $\pi_1(\partial E(K))$.  This contradicts the assumption that $K$ was nontrivial, and hence $h$ must be $\pi_1$-essential.

To show that $h$ is $\partial \pi_1$-essential, assume now that there is homotopy taking $h(\alpha)$ to $\partial E(K)$, relative to $\partial h(\alpha)$.  Using this homotopy, we can modify $h$ to obtain a new map $h_0:M \rightarrow E(K)$, which sends $\partial M \cup N(\alpha)$ to $\partial E(K)$, where $N(\alpha)$ is a tubular neighborhood of $\alpha$ in $M$.  Moreover, the restrictions of $h$ and $h_0$ to $\partial M$ will be homotopic inside $\partial E(K)$.

Consider now the disk $D_0 = M \backslash N(\alpha)$.  The map $h_0$ restricts to give $h_0:D_0 \rightarrow E(K)$, with $h_0(\partial D_0) \subset E(K)$.  Suppose first that $h_0(\partial D_0)$ is a nontrivial loop in $\pi_1(\partial E(K))$.  Then as above the Loop Theorem implies that $K$ is the trivial knot, which is a contradiction.

Suppose then that $h_0(\partial D_0)$ is null-homotopic in $\partial E(K)$.  This null-homotopy can be viewed as a map $\rho:D_0 \rightarrow \partial E(K)$, where $\rho|_{\partial D_0} \equiv h_0|_{\partial D_0} $.  Then we can define a map $h_1 : M \rightarrow \partial E(K)$ by
\[
h_1(x) = \begin{cases}
\rho(x) \, \, \, \qquad \text{for } x \in D_0\\
h_0(x) \qquad \text{for } x \in N(\alpha)\\
\end{cases}.
\]
Furthermore, we have that $h_1|_{\partial M} \equiv h_0|_{\partial M}$, which is homotopic to $h|_{\partial M}$ in $\partial E(K)$.  

Note, however, that $\partial M$ is homotopic to $2 \cdot c$ in $M$.  Hence in $\pi_1 (\partial E(K))$ the homotopy class of $h(\partial M)=a \cdot m + b\cdot l$ will be two times the homotopy class of $h_1(c)$, which contradicts the assumption that at least one of $a$ or $b$ is odd.  Thus $h$ must be $\partial \pi_1$-essential, and therefore esssential.
\end{proof}

\section{Immersed crosscap number 1 knots}
\label{sec:cable}

For the remainder of the paper, let $K$ be a knot with $\gamma_I (K) = 1$, $M$ a M\"{o}bius band, and $h:M \rightarrow E(K)$ an immersion such that $F=h(M)$ is an immersed nonorientable spanning surface in $E(K)$.  We will make use of the following theorem of Cannon and Feustel.  Note that we can define the notions of $\pi_1$-essential, $\partial\pi_1$-essential, and essential as in Section~\ref{sec:essentialmaps} for maps into any manifold $Y$.  Let $A$ be an annulus.  
\begin{thm}[\cite{cannon1976essential}, Theorem 4]
\label{thm:cannonandfeustel}
Let $Y$ be a compact orientable 3-manifold, and $h:A \rightarrow Y$ an essential map with $h(\partial A) \subset \partial Y$.  Then there exists an essential embedding $f:\Sigma \rightarrow Y$ with $f(\partial \Sigma) \subset \partial Y$, where $\Sigma$ is either an annulus or M\"{o}bius band.  Moreover, if $h|_{\partial A}$ is an embedding, then we may assume that $f(\partial \Sigma) \subset h (\partial A)$.
\end{thm}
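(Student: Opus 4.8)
The plan is to reprove this by the Papakyriakopoulos--Stallings tower construction, adapting the argument for maps of disks (as in the proof of the Loop Theorem and Dehn's Lemma; see e.g.\ Hempel's book) to maps of the annulus, working throughout in the PL category. First I would put $h$ in general position, so that its singular set $\Sigma(h)\subset Y$ is a finite graph of double-point arcs and circles together with isolated triple points, and $h^{-1}(\Sigma(h))\subset A$ is a finite graph $\Gamma$ whose vertices lie over the triple points and whose free ends lie on $\partial A$; I would also put $h|_{\partial A}$ in general position in $\partial Y$. Set the complexity $c(h)$ to be the pair (number of components of $\Gamma$, number of double points of $h|_{\partial A}$) ordered lexicographically, and induct on $c(h)$; if $c(h)=(0,0)$ then $h$ is already an embedding. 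It is convenient to prove from the outset the slightly stronger statement in which the domain is allowed to be an annulus \emph{or} a M\"obius band, since M\"obius bands arise at intermediate stages and the two cases are treated uniformly (both have infinite cyclic $\pi_1$).

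For the inductive step, let $N_0$ be a compact regular neighborhood of $h(A)$ in $Y$ and build the tower
\[
N_0 \xleftarrow{\;p_1\;} N_1 \xleftarrow{\;p_2\;} N_2 \xleftarrow{\quad}\cdots\xleftarrow{\;p_n\;} N_n ,
\]
where $p_{i+1}$ is (the restriction to a compact regular neighborhood of the lifted image of the domain of) the cover of $N_i$ corresponding to the $\pi_1$-image of the domain, and $h$ lifts to $h_i\colon A\to N_i$ at each stage. The same finiteness argument as in the disk case — it uses only compactness of the domain — shows the tower terminates, so at the top $p_n$ is trivial and $h_{n,*}\colon\pi_1(A)\to\pi_1(N_n)$ is onto; since $h_*$ is injective (by essentiality of $h$) and the $p_i$ are $\pi_1$-injective, $h_{n,*}$ is also injective, whence $\pi_1(N_n)\cong\mathbb{Z}$, generated by the image of the core $c$ of $A$.

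Next I would convert $h_n\colon A\to N_n$ into an essential embedding of an annulus or M\"obius band. Because $\pi_1(N_n)$ is infinite cyclic, every double circle of $h_n$ bounds a subdisk of $A$ on which $h_n$ is $\pi_1$-trivial, so innermost double circles and innermost double arcs (with endpoints on $\partial A$) can be removed one at a time by cut-and-reglue surgeries on $A$ along these curves — passing to the infinite cyclic cover of $N_n$ first if needed, where the relevant circles bound embedded disks — each surgery strictly decreasing the number of double curves and keeping the boundary on $\partial N_n$. Cutting an annulus (or M\"obius band) along interior arcs and circles produces only disks, annuli, and M\"obius bands, and I would discard inessential disk components. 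Since $h_n$ is essential, some surviving component carries the class of $c$ in $\pi_1$ and the spanning-arc class in relative $\pi_1$ nontrivially, hence is not a disk; so one obtains an essential embedding $f_n\colon\Sigma\to N_n$ with $\Sigma$ an annulus or M\"obius band and $f_n(\partial\Sigma)\subseteq h_n(\partial A)$.

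Finally, set $f=p_1\circ\cdots\circ p_n\circ f_n\colon\Sigma\to N_0\subseteq Y$; essentiality of $f$ in $Y$ follows since each $p_i$ is $\pi_1$-injective, $f_{n,*}$ is injective, and the surgeries were chosen not to kill the core or spanning-arc classes. If $f$ is an embedding we are done; otherwise its singularities are quotients of sheets identified by the coverings, the standard estimate from the tower argument gives $c(f)<c(h)$, and we finish by the induction hypothesis in its annulus-or-M\"obius form. For the last clause, when $h|_{\partial A}$ is an embedding the tower and all the surgeries can be localized in the interior of the domain (or, on $\partial Y$, along $h(\partial A)$ itself), giving $f(\partial\Sigma)\subseteq h(\partial A)$. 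The main obstacle is exactly the bookkeeping in the last two steps: one must arrange the surgeries at the top of the tower so as to simultaneously lower the double-curve count, keep some surgered component essential in both the $\pi_1$ and the $\partial\pi_1$ senses (so that, e.g., a surgery along a boundary arc does not make the spanning arc boundary-parallel), and output only an annulus or M\"obius band; carrying $\partial\pi_1$-essentiality through the whole tower alongside $\pi_1$-essentiality is what makes this genuinely more delicate than the classical disk case.
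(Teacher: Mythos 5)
The paper does not prove this statement at all: it is quoted verbatim from Cannon and Feustel \cite{cannon1976essential} and used as a black box, so there is no internal proof to compare against. Your tower strategy is indeed the general approach of the cited source (and of the classical annulus-theorem literature), so the overall plan is reasonable; the problem is in the execution at the top of the tower.

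The concrete gap is your claim that, because $\pi_1(N_n)\cong\mathbb{Z}$, ``every double circle of $h_n$ bounds a subdisk of $A$ on which $h_n$ is $\pi_1$-trivial,'' so that all double curves can be removed by innermost-disk surgeries. A circle component of $h_n^{-1}(\Sigma(h_n))$ that is parallel to the core of $A$ bounds no disk in $A$ and maps to a generator of $\pi_1(N_n)$, hence is not null-homotopic anywhere; these \emph{essential} double circles are exactly the hard case, and they are also precisely where M\"{o}bius bands genuinely enter the conclusion (cut-and-paste along a core-parallel double circle can produce a one-sided surface). Your proposed fix --- ``passing to the infinite cyclic cover of $N_n$ first if needed'' --- is vacuous: since $h_{n,*}$ is an isomorphism onto $\pi_1(N_n)\cong\mathbb{Z}$, the map $h_n$ does not lift to any nontrivial cover of $N_n$, and the infinite cyclic cover is the universal cover. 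The actual arguments at the top of the tower instead exploit that a compact orientable irreducible $3$-manifold with infinite cyclic fundamental group is a solid torus, and locate the embedded essential annulus or M\"{o}bius band using the pattern of curves $h_n(\partial A)$ on $\partial N_n$ rather than by surgering the singular map along its double curves. A secondary, related weakness is that the descent of $\partial\pi_1$-essentiality through the surgeries and down the tower is asserted rather than established; you flag this yourself, but as written the argument does not close that loop. Given that the result is being imported from the literature, the honest course is to cite it (as the paper does) or to follow the solid-torus analysis at the top of the tower rather than the innermost-disk surgery scheme.
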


We begin the proof of Theorem~\ref{thm:cableknots} by first showing that $K$ must be a torus or cable knot, before describing its type. 

\begin{lem}
\label{lem:cableortorus}
If $\gamma_I(K) = 1$, then $K$ is a torus or cable knot.
\end{lem}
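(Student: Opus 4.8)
The plan is to promote the immersed Möbius band $F = h(M) \subset E(K)$ to an honest embedded essential annulus or Möbius band in $E(K)$, and then invoke the structure theory of $3$-manifolds containing essential annuli (or Möbius bands). First I would apply Theorem~\ref{thm:essential}: since $K$ is nontrivial and $h(\partial M)$ is a longitude (of the form $0\cdot m + 1\cdot l$, so $b=1$ is odd), the map $h$ is essential. However, $h$ is a map of a Möbius band, while the Cannon--Feustel theorem (Theorem~\ref{thm:cannonandfeustel}) takes as input an essential map of an \emph{annulus}. So the first step is to pass from $M$ to its orientation double cover $\tilde{M}$, which is an annulus $A$, and observe that the composition $h \circ \pi : A \to E(K)$ (where $\pi : A \to M$ is the covering projection) is still essential: $\pi_1$-injectivity of $h \circ \pi$ follows from $\pi_1$-injectivity of $h$ together with the fact that $\pi_*(\pi_1 A)$ has finite index in $\pi_1 M$ and $\pi_1(E(K))$ is torsion-free; $\partial\pi_1$-injectivity of $h\circ\pi$ is checked similarly using that $h$ is $\partial\pi_1$-essential. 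One must be slightly careful that $h\circ\pi$ maps $\partial A$ into $\partial E(K)$ and, after an isotopy of the collar, that $h\circ\pi$ restricted to $\partial A$ is an embedding (it double-covers the longitude $h(\partial M)$, so it is an embedding on each boundary circle); this lets us use the "moreover" clause of Theorem~\ref{thm:cannonandfeustel}.

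Next I would apply Theorem~\ref{thm:cannonandfeustel} to $h \circ \pi : A \to E(K)$ to obtain an essential \emph{embedded} annulus or Möbius band $f : \Sigma \hookrightarrow E(K)$ with $f(\partial\Sigma) \subset h(\partial A) = h(\partial M)$, i.e.\ with boundary on the longitude of $\partial E(K)$ (hence essential in $\partial E(K)$, being nontrivial in homology). The existence of such an embedded essential annulus or Möbius band in $E(K)$ is exactly the condition that $E(K)$ is \emph{not} a simple (hyperbolic-type) manifold in the sense of the JSJ / torus-decomposition and Seifert-fibering theory. Concretely: an embedded essential Möbius band in $E(K)$ has an annular boundary of its tubular neighborhood which is an essential annulus, so in all cases we get an embedded essential annulus $\mathcal{A} \subset E(K)$ with $\partial \mathcal{A}$ on $\partial E(K)$. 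By the classical characterization (e.g.\ via the annulus theorem and the work of Jaco--Shalen and Johannson, or as stated in standard knot-theory references), a nontrivial knot $K$ whose exterior contains an essential annulus with boundary on $\partial E(K)$ is precisely a torus knot or a cable knot (equivalently, a satellite knot with a cabling pattern, or a knot whose exterior is Seifert fibered). This is the step that directly yields the conclusion.

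The main obstacle I anticipate is not the topology of the final dichotomy (which is classical) but rather the bookkeeping in the first step: verifying carefully that lifting $h$ to the orientation double cover preserves \emph{both} essentiality conditions, and arranging the boundary behavior so that the refined conclusion of Theorem~\ref{thm:cannonandfeustel} applies and places $f(\partial\Sigma)$ back on the longitude. One subtlety is that $\partial A$ has two components, each double-covering $\partial M$ via the connecting map of the Möbius band, so I should confirm $h\circ\pi|_{\partial A}$ is an embedding on each component and that the two boundary circles map to the \emph{same} longitude — this is what ties the output annulus's boundary to $\partial E(K)$ correctly. A secondary point to address is why an embedded essential annulus (as opposed to, say, a boundary-parallel or compressible one, which are already excluded by essentiality) with non-meridional, in fact longitudinal, boundary slope forces $K$ to be a torus or cable knot rather than merely a satellite: here one uses that the annulus cannot be boundary-parallel (its core would then be a longitude, but an essential embedded annulus that is boundary-parallel is not essential in the required sense), so its complement in $E(K)$ gives the cabling/Seifert decomposition. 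Once $K$ is known to be a torus or cable knot, the finer determination of its type (the "$2p$" and "$q$" constraints) is carried out in the subsequent part of the proof of Theorem~\ref{thm:cableknots}.
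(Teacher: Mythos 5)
Your proposal follows essentially the same route as the paper: establish essentiality of $h$ via Theorem~\ref{thm:essential}, pass to the orientation double cover of $M$ to get an essential map of an annulus, feed that into Theorem~\ref{thm:cannonandfeustel}, and then identify the resulting embedded annulus or M\"{o}bius band using the classification of essential annuli in knot exteriors. One step, however, is stated incorrectly. You claim that $h\circ\pi|_{\partial A}$ is an embedding because it is ``an embedding on each boundary circle.'' In fact each of the two boundary circles of the annulus maps \emph{homeomorphically} (not two-to-one) onto the single curve $h(\partial M)$, so the restriction to $\partial A$ is two-to-one onto its image and is not an embedding of $\partial A$. The repair is exactly what the paper does: push the images of the two sheets off $h(\partial M)$ to a pair of parallel copies of the longitude in $\partial E(K)$ before invoking the ``moreover'' clause of Theorem~\ref{thm:cannonandfeustel}; the embedded $\Sigma$ one obtains then has boundary in these parallel push-offs, which still pins down the boundary slope as longitudinal.

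A second, smaller imprecision: your blanket assertion that a nontrivial knot whose exterior contains an essential annulus with boundary on $\partial E(K)$ is precisely a torus or cable knot omits composite knots, whose exteriors contain essential annuli with \emph{meridional} boundary arising from decomposing spheres. You do recover the correct statement later by noting the boundary slope here is longitudinal; the paper handles this by citing Simon's classification (essential annuli are either pieces of decomposing spheres or cabling annuli) and excluding the former because the boundary is not meridional. Finally, in the embedded M\"{o}bius band case the paper simply quotes Clark's theorem that $\gamma_3(K)=1$ forces $K$ to be a $(2,q)$-torus or cable knot, whereas you pass to the frontier of a tubular neighborhood to get an essential annulus; both work, since that frontier annulus has two longitudinal (hence non-meridional) boundary components.
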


\begin{proof}
By Theorem~\ref{thm:essential} the map $h:M \rightarrow E(K)$ is essential.  Let $\tau: A \rightarrow M$ be a double-covering map.  As $\tau_*$ is injective on both $\pi_1(A)$ and $\pi_1(A, \partial A)$, the map $h \circ \tau : A \rightarrow E(K)$ will also be essential.  Note that $(h \circ \tau)|_{\partial A}$ will not be an embedding, but by pushing the image of the two sheets of the covering map off of $h (\partial M)$ we obtain an essential map which is an embedding along the boundary $\partial M$.  Then by Theorem~\ref{thm:cannonandfeustel} there is an essential embedding $f:\Sigma \rightarrow E(K)$, where $\Sigma$ is either a M\"{o}bius band or annulus, and $f(\partial \Sigma)$ is contained in a pair of parallel push-offs of $h(\partial M)$ along $\partial E(K)$.

Suppose first that $\Sigma=M$ is a M\"{o}bius band.  Then $f:M \rightarrow E(K)$ is an embedding, with $f(\partial M)$ a longitude of $\partial E(K)$.  Hence $\gamma_3(K)=1$, and by \cite{clark1978crosscaps} it follows that $K$ is a $(2,q)$-torus or $(2,q)$-cable knot.

One the other hand, if $\Sigma=A$ is an annulus, then we obtain an essential annulus embedded in $E(K)$.  By \cite{simon1973algebraic}, the only such annuli are either subsurfaces of decomposing spheres for $K$, or cabling annuli (see also \cite{ozawa2016knots}).  Since the boundary $\partial f(A)$ is a pair of longitudes of $\partial E(K)$ and not meridians, it follows that $f(A)$ is cannot be extended to a decomposing sphere.  Hence $K$ is a torus or cable knot, with $f(A)$ its cabling annulus.
\end{proof}

Notice that in the above proof the cabling annulus $f(A)$ we obtain in $E(K)$ has the same boundary slope as the original immersed M\"{o}bius band $h(M)$.  In other words, $f(\partial A)$ consists of a pair of simple closed curves in $\partial E(K)$ which are parallel to the longitudinal curve $h(\partial M)$.  

As torus and cable knots are necessarily prime, we have the following immediate corollary.

\begin{cor}
\label{cor:prime}
Any knot $K$ with $\gamma_I(K) = 1$ is prime.
\end{cor}

Now that we've established that any knot $K$ with $\gamma_I(K)=1$ is a nontrivial $(p,q)$-torus or $(p,q)$-cable knot, we proceed to determine what restrictions (if any) this situation places on $p$ and $q$.  We begin by answering the question in the case of torus knots.

\begin{lem}
\label{lem:torusknot}
If $K$ has $\gamma_I(K)=1$ and is a $(p,q)$-torus knot, then one of $p$ or $q$ must be even.
\end{lem}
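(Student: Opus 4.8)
The plan is to assume toward a contradiction that $K = T(p,q)$ with both $p$ and $q$ odd, and derive a parity obstruction from the homology of the immersed Möbius band together with the essentialness provided by Theorem~\ref{thm:essential}. First I would fix the standard torus on which $K$ lies and recall that for the $(p,q)$-torus knot the Seifert-framed longitude on $\partial E(K)$ is not the curve $l_V$ coming from the Heegaard torus: in fact the knot $K$ sits on $\partial V$ as $p\cdot m_V + q \cdot l_V$, and the surface framing differs from the $0$-framing by $pq$. The key point is that the boundary slope of \emph{any} nonorientable spanning surface (immersed or not) is constrained by the parity of its first Betti number via the standard fact that the boundary of a spanning surface, pushed off along the surface, represents a well-defined element of $H_1$ determined mod~$2$ by the surface. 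Concretely, if $h:M\to E(K)$ is the immersed Möbius band with $F = h(M)$, then $h(\partial M) = h(2c)$ is homologous (hence homotopic, since $\partial E(K)$ is a torus and $\pi_1$ there is abelian) in $\partial E(K)$ to $2\cdot h_*(c)$, so $h(\partial M)$ represents an \emph{even} multiple of a primitive class in $\pi_1(\partial E(K))$ once we write it in a basis adapted to $h_*(c)$. But $h(\partial M)$ is the Seifert-framed longitude $l$, which is primitive; so writing $l = a\cdot m + b\cdot l$ forces $(a,b) = (0,1)$, and the argument of Theorem~\ref{thm:essential} already shows that $h_*(c)$ has order-two obstruction — I would instead run the computation the other way.

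Here is the cleaner route I would actually carry out. By Theorem~\ref{thm:essential}, $h$ is essential, and by (the first paragraph of the proof of) Lemma~\ref{lem:cableortorus}, doubling via $\tau:A\to M$ and applying Cannon–Feustel produces an essential embedded annulus or Möbius band $f:\Sigma\to E(K)$ whose boundary slope equals that of $h(\partial M)$, i.e.\ the longitude. If $\Sigma = M$ we get an embedded Möbius band with longitudinal boundary, so $\gamma_3(K) = 1$, and by Clark's theorem $K$ is a $(2,q)$-torus knot — contradicting that both coordinates are odd. So $\Sigma = A$ is the cabling annulus, meaning $K$ lies on $\partial V$ for a (possibly unknotted) solid torus $V$ and the longitude of $E(K)$ is isotopic on $\partial E(K)$ to the core direction of that cabling. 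For a torus knot this forces the core of $V$ to be unknotted and $K = T(p,q)$ with the cabling annulus having boundary slope $pq$ on $\partial E(K)$ in the standard meridian–longitude basis. The contradiction: the cabling annulus slope for $T(p,q)$ is $pq$, which is odd when $p,q$ are both odd, whereas the slope we derived must equal the \emph{Seifert} longitude slope $0$ — an even number. I would present this as: the boundary of the Cannon–Feustel surface has slope $0$ (it's parallel to $h(\partial M)$, the Seifert longitude), but the cabling annulus of $T(p,q)$ has slope $pq$; equating these gives $pq = 0$, impossible, unless the surface is instead a Möbius band, which by Clark forces one of $p,q$ even.

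The main obstacle I anticipate is pinning down exactly which ambient curve $h(\partial M)$ is and how its slope interacts with the cabling-annulus slope — in particular making sure the "parallel push-off in $\partial E(K)$" statement in the remark after Lemma~\ref{lem:cableortorus} really does identify $f(\partial A)$ with the Seifert longitude slope and not some other longitude, and handling the bookkeeping of which solid torus ($V$ or its complement) the cabling annulus lies in for a torus knot versus a genuine cable. A secondary subtlety is that a priori $f(\partial \Sigma)$ is only contained in parallel push-offs of $h(\partial M)$, so I need the orientation/parity count to be insensitive to the choice among finitely many parallel copies; since all such copies have the same slope this is fine, but it should be stated. Once the slope identification is in hand, the parity contradiction $pq \equiv 0 \pmod 2$ is immediate, so the substance is entirely in the slope bookkeeping, which I would keep brief by citing Clark and the Simon/Ozawa classification already invoked in Lemma~\ref{lem:cableortorus}.
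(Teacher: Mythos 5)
There is a genuine gap, and it sits at the heart of your ``cleaner route'': you identify $h(\partial M)$ with the Seifert-framed (slope $0$) longitude and then derive the contradiction $pq=0$ by comparing it with the cabling-annulus slope. But the definition of a spanning surface in $E(K)$ only requires $h(\partial M)$ to be \emph{a} longitude, i.e.\ a curve of the form $a\cdot m + l$; it is not the $0$-framed longitude. Indeed, for the immersed M\"{o}bius band of Proposition~\ref{prop:crosscap1} the boundary is the surface-framed push-off of $K$, whose slope is exactly the cabling slope $pq$ (this is also why Theorem~\ref{thm:essential} is stated for arbitrary $a$ with $b$ odd). So the two slopes you are equating genuinely agree, no contradiction arises, and your argument would ``prove'' that the annulus output of Cannon--Feustel never occurs for a torus knot, which is false. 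Your first paragraph has a related error: $h(c)$ does not lie in $\partial E(K)$, so the relation $\partial M \simeq 2c$ gives $h(\partial M)\simeq h(c)^2$ in $\pi_1(E(K))$, not in $\pi_1(\partial E(K))$, and the ``even multiple of a primitive class in $\pi_1(\partial E(K))$'' step does not make sense as stated.

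Your idea can be repaired, and the repaired version is a legitimate alternative to the paper's proof: since $[\partial M]=2[c]$ and the Seifert longitude is null-homologous, the class of $h(\partial M)$ in $H_1(E(K))\cong\mathbb{Z}$ is $a=2[h(c)]$, hence the boundary slope $a$ is \emph{even}; combining this with the observation after Lemma~\ref{lem:cableortorus} that this slope equals the cabling slope $pq$ gives $pq$ even, which is the lemma. (The M\"{o}bius-band branch of Cannon--Feustel is handled by Clark exactly as you say.) The paper instead argues inside $\pi_1(E(K))=\langle x,y\mid x^p=y^q\rangle$: pushing $h(\partial M)$ to the core of one solid torus shows it is conjugate to $x^p$, while $h(\partial M)\simeq h(c)^2$ has even word length; since the relator $x^py^{-q}$ has even length when $p$ and $q$ are both odd, word length mod $2$ is well defined and $x^p$ has odd length, a contradiction. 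Either route works, but as written your proof does not close.
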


\begin{proof}
Suppose to the contrary that both $p$ and $q$ are odd.  Suppose further that $K$ sits on the standardly embedded torus $T \subset S^3$ obtained by extending the cabling annulus from the proof of Lemma~\ref{lem:cableortorus}, and let $V$ be one of the solid tori bounded by $T$.  Recall that $\pi_1(E(K))$ can be presented as 
\[
\pi_1(E(K)) = \langle x, y \, | \, x^p = y^q \rangle,
\]
where $x$ and $y$ are the homotopy classes of the cores of the two solid tori in $S^3 \backslash T$.  Suppose, without loss of generality, that $x$ is the homotopy class of the core of $V$.

Note that $T \cap E(K)$ is the cabling annulus obtained in the proof of Lemma~\ref{lem:cableortorus}.  Furthermore, we can homotope $h(\partial M)$ so that it sits on $\partial E(K) \cap \operatorname{int} V$.  Pushing $h (\partial M)$ off of $\partial E(K)$ towards the core of $V$, we see that it is homotopic to $x^p$.

Pushing $\partial M$ inside $M$ towards its core $c$ and tracking its image under $h$, we see that $h(\partial M)$ is also freely homotopic to $h(c)^2$, and hence $h(c)^2$ is conjugate to $x^p$ in $\pi_1(E(K))$.  Note, however, that the only relation the in the above group presentation does not change the parity of the algebraic length of any of the words in $\pi_1(E(K))$.  Hence any representation of $h(c)^2$ as a word in the generators $x$ and $y$ will always have even length, and any word representing a conjugate of $x^p$ will have odd length, a contradiction.  Thus either $p$ or $q$ must be even.
\end{proof}

We now state and prove Proposition~\ref{prop:cableknot}, which will serve to complete the proof of Theorem~\ref{thm:cableknots}.

\begin{prop}
\label{prop:cableknot}
If $K$ has $\gamma_I(K)=1$ and is a $(p,q)$-cable knot, then $p$ must be even.
\end{prop}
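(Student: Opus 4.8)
The plan is to cut $E(K)$ along the cabling annulus provided by Lemma~\ref{lem:cableortorus}, locate the core of the immersed M\"obius band inside one of the two resulting pieces, and read off the parity of $p$ there. First I would fix the picture. Since $K$ is a $(p,q)$-cable it lies on $\partial V$ for a solid torus $V\subset S^3$ with knotted core $K'$; set $W=S^3\setminus\operatorname{int}V=E(K')$. The annulus $A$ produced by Lemma~\ref{lem:cableortorus} is (isotopic to) the cabling annulus $\partial V\setminus N(K)$, and it separates $E(K)$ into $V_{\mathrm{cut}}=V\setminus N(K)$, which is a solid torus with core isotopic to $K'$, and $W_{\mathrm{cut}}=W\setminus N(K)$, which deformation retracts onto $E(K')$. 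Hence $\pi_1(E(K))$ is the amalgam of $\pi_1(V_{\mathrm{cut}})=\langle v\rangle$ and $\pi_1(E(K'))$ along $\pi_1(A)\cong\mathbb Z$, whose generator $\phi$ (the core of $A$) equals $v^{p}$ in $\langle v\rangle$ (it wraps $p$ times around $K'$) and equals $\mu'^{\,q}\lambda'^{\,p}$ in $\pi_1(E(K'))$, with $\mu',\lambda'$ the meridian and longitude of $K'$. In $H_1(E(K);\mathbb Z)$ the element $\phi$ is $pq$ times the meridian, hence nonzero since $|p|\ge2$ and $\gcd(p,q)=1$.

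Next I would take an immersed M\"obius band $h:M\to E(K)$ realizing $\gamma_I(K)=1$, which is essential by Theorem~\ref{thm:essential}. After isotoping $h(\partial M)$ off $A$ (it is parallel to $\partial A$ in $\partial E(K)$) and making $h$ transverse to $A$, the preimage $h^{-1}(A)$ is a disjoint union of circles in $\operatorname{int}M$; a standard innermost-disk argument using incompressibility of $A$ lets me assume each is essential in $M$. I would then rule out the case that some component $\gamma$ of $h^{-1}(A)$ is a core of $M$: in that case $h_*(\gamma)=\phi^{k}$ with $k\ne0$ (as $h$ is $\pi_1$-essential), while $h_*(\partial M)$ is conjugate to $h_*(\gamma)^2=\phi^{2k}$ because $\partial M\simeq 2\gamma$ in $M$; but $h(\partial M)$ is parallel to $\partial A$, hence freely homotopic to $\phi^{\pm1}$, and comparing classes in $H_1(E(K);\mathbb Z)$ gives $2k[\phi]=\pm[\phi]$, forcing $2k=\pm1$ since $[\phi]\ne0$ -- impossible. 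So every component of $h^{-1}(A)$ is boundary-parallel in $M$, and the core $c$ of $M$, which we may take to lie in the innermost complementary region $M_0$ (a M\"obius band), is disjoint from $h^{-1}(A)$. Therefore $h(M_0)$ is contained in a single piece, $V_{\mathrm{cut}}$ or $W_{\mathrm{cut}}$, and $h_*(c)$ lies in $\pi_1$ of that piece.

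Finally, $\partial M_0$ is either the innermost component of $h^{-1}(A)$ or (when $h^{-1}(A)=\emptyset$) $\partial M$ itself, and in either case $h_*(\partial M_0)$ is, inside $\pi_1$ of the piece containing $M_0$, a power $\phi^{k}$ of $\phi$; since $\partial M_0\simeq 2c$ in $M$ this power equals $h_*(c)^2$, and since $h_*(\partial M_0)$ is conjugate in $\pi_1(E(K))$ to $h_*(\partial M)\sim\phi^{\pm1}$ the $H_1$-comparison again forces $k=\pm1$. If the piece is $V_{\mathrm{cut}}=\langle v\rangle$, then $h_*(c)=v^{j}$ and $v^{2j}=v^{\pm p}$, so $p=\pm2j$ is even. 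If the piece is $W_{\mathrm{cut}}\simeq E(K')$, then (after replacing $h_*(c)$ by a conjugate) it commutes with its square $(\mu'^{\,q}\lambda'^{\,p})^{\pm1}$, hence with $\mu'^{\,q}\lambda'^{\,p}$; but $\mu'^{\,q}\lambda'^{\,p}$ is a nontrivial, non-central peripheral element of the nontrivial knot group $\pi_1(E(K'))$ -- non-central because $|p|\ge2$ and $\gcd(p,q)=1$ prevent it from being a power of the Seifert fibre in the torus-knot case, and it is parabolic otherwise -- so its centralizer is the peripheral subgroup $\langle\mu',\lambda'\rangle\cong\mathbb Z^2$; writing $h_*(c)=\mu'^{\,s}\lambda'^{\,t}$ and squaring gives $2t=\pm p$, so once more $p$ is even.

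The main work, I expect, is in the first paragraph -- confirming that the annulus of Lemma~\ref{lem:cableortorus} is the cabling annulus of the given $(p,q)$-structure, that $V\setminus N(K)$ really is a solid torus with the stated $\pi_1$-data, and that $\pi_1(E(K))$ splits as the claimed amalgam -- together with the centralizer assertion for $\mu'^{\,q}\lambda'^{\,p}$ in the last paragraph, which is immediate when $K'$ is hyperbolic but requires the Seifert-fibred structure to be used when $K'$ is itself a torus or cable knot.
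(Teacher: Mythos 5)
Your overall strategy --- cut along the cabling annulus $A$, clean up $h^{-1}(A)$, pass to the innermost M\"obius band $M_0$, identify $h(\partial M_0)$ with the core of $A$ via $H_1(E(K))$, and then read the parity of $p$ off the relation $h_*(c)^2=h_*(\partial M_0)$ inside the piece containing $h(M_0)$ --- matches the paper's setup through Lemmas~\ref{lem:inessentialloops} and \ref{lem:boundaryparallelintersections} (your conjugacy/$H_1$ argument ruling out core-type curves in $h^{-1}(A)$ replaces the paper's one-line observation that $A$ is two-sided while the core of $M$ is one-sided, but both work). Where you genuinely diverge is in the two terminal cases. In the case $h(M_0)\subset V\setminus N(K)$ your argument is a real simplification: that piece is a solid torus with $\pi_1\cong\mathbb{Z}=\langle v\rangle$, the core of $A$ includes as $v^p$, and $v^{2j}=v^{\pm p}$ immediately forces $p$ even. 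The paper instead re-embeds the solid torus standardly in $S^3$, Dehn-twists so that both parameters are odd, and invokes the word-length-parity argument of Lemma~\ref{lem:torusknot}; your route avoids all of that.

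The gap is in the $W$ case. There you need the centralizer of the peripheral element $g=\mu'^{q}\lambda'^{p}$ in $\pi_1(E(K'))$ to be exactly the peripheral $\mathbb{Z}^2$, and this step is carrying all the weight: without it, the only consequence of $h_*(c)^2\sim g^{\pm1}$ is the $H_1(E(K'))$ relation $2[h_*(c)]=\pm q[\mu']$, which gives $q$ even, i.e.\ $p$ \emph{odd}. The centralizer statement is true, but your trichotomy ``torus knot / cable knot / otherwise hyperbolic'' for the companion $K'$ is not exhaustive: $K'$ may be composite or a non-cable satellite, in which case $g$ is not parabolic in any hyperbolic structure on $E(K')$, and one must run the centralizer computation through the JSJ piece adjacent to $\partial E(K')$ (checking that $g$ is not a power of the fiber slope of a Seifert piece, which is where $|p|\geq2$ and $\gcd(p,q)=1$ enter) and then propagate it across the graph-of-groups decomposition. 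That is essentially the same characteristic-submanifold/annulus-theorem machinery that the paper packages into a second application of Theorem~\ref{thm:cannonandfeustel} together with Simon's classification of embedded essential annuli in knot exteriors (Lemma~\ref{lem:outside}). So the $W$ case needs either that centralizer theorem quoted with a genuine reference and complete case analysis, or the paper's embedded-annulus detour. (Incidentally, your own computation there yields $2s=\pm q$ and $2t=\pm p$ simultaneously, contradicting $\gcd(p,q)=1$; the $W$ case is in fact vacuous, not merely a case in which $p$ is even.)
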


We will break the proof of Proposition~\ref{prop:cableknot} into several lemmas.  In what follows, suppose that $K$ is a knot with $\gamma_I(K)=1$, which is a $(p,q)$-cable knot.  Furthermore, suppose that $K$ lies on the boundary $T$ of a solid torus $V$, this time knotted in $S^3$, where $T \cap E(K)$ is the cabling annulus $A$ obtained from the proof of Lemma~\ref{lem:cableortorus}.  Let $W$ be the closure of $S^3 \backslash V$.  Again, $h(\partial M)$ is parallel to the components of $\partial A$ in $\partial E(K)$, but this time we homotope $h(\partial M)$ so that it is embedded and lies on the outside of $V$, in $\partial E(K) \cap W$.

\begin{lem}
\label{lem:outside}
Suppose that $h(M)$ can be homotoped so that it lies entirely in $E(K) \cap W$.  Then $p$ must be even.
\end{lem}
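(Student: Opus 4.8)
The plan is to push the immersed M\"{o}bius band into the exterior of the companion knot and re-run the argument of Lemma~\ref{lem:cableortorus} (Theorem~\ref{thm:essential} followed by Cannon and Feustel's Theorem~\ref{thm:cannonandfeustel}), this time using a boundary-slope obstruction in place of the final contradiction. First I would pin down the ambient manifold. Since $V$ is a knotted solid torus, it is a tubular neighborhood of a nontrivial knot $C \subset S^3$, and $W = E(C)$; moreover the meridian $m$ and Seifert longitude $l$ of $\partial V$ are exactly a meridian $m_C$ and the Seifert longitude $l_C$ of $C$. Because $K$ lies on $T = \partial V = \partial W$, the space $E(K) \cap W$ is obtained from $W$ by excising a collar of a neighborhood of $K$ in $\partial W$, so there is a homeomorphism $E(K) \cap W \cong E(C)$ that is the identity away from a neighborhood of $K$; under it the cabling annulus $A = T \cap E(K)$ and the annulus $\partial E(K) \cap W$ become a bicollar of $\partial E(C)$. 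After a small homotopy making $h(\operatorname{int} M)$ miss $\partial(E(K)\cap W)$, the hypothesis produces a proper map $h : M \to E(C)$ whose boundary curve $h(\partial M)$ represents $q \cdot m_C + p \cdot l_C$ on $\partial E(C)$; recall $\gcd(p,q)=1$ since $K$ is a knot, and $|p| \geq 2$.

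Next I would establish essentiality over the companion. Assume for contradiction that $p$ is odd. Since $C$ is nontrivial, Theorem~\ref{thm:essential} applied with $C$ in place of $K$ shows that $h : M \to E(C)$ is essential. Exactly as in the proof of Lemma~\ref{lem:cableortorus}, pulling back along the orientation double cover $\tau : A \to M$ gives an essential map $h \circ \tau : A \to E(C)$, and after pushing its two boundary circles apart we may assume $h \circ \tau$ restricts to an embedding on $\partial A$. Theorem~\ref{thm:cannonandfeustel} then produces an essential embedding $f : \Sigma \to E(C)$, with $\Sigma$ either an annulus or a M\"{o}bius band, such that every component of $f(\partial \Sigma)$ is a curve on $\partial E(C)$ parallel to $h(\partial M)$, hence of slope $q \cdot m_C + p \cdot l_C$.

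Finally I would rule this out using the structure of essential annuli in a nontrivial knot exterior. If $\Sigma$ is an annulus, then by Simon's theorem \cite{simon1973algebraic} (as invoked in Lemma~\ref{lem:cableortorus}) $f(\Sigma)$ is either a subsurface of a decomposing sphere for $C$, in which case $f(\partial \Sigma)$ is meridional, or a cabling annulus for $C$, in which case $f(\partial \Sigma)$ has integral slope (the cabling framing). If $\Sigma$ is a M\"{o}bius band, the same dichotomy applies to the frontier annulus of a regular neighborhood of $f(\Sigma)$, which is an essential annulus because $E(C)$ is irreducible with incompressible boundary and is not a solid torus (so in particular not a twisted $I$-bundle over a M\"{o}bius band); its boundary is again two curves parallel to $h(\partial M)$. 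But $|p| \geq 2$ together with $\gcd(p,q) = 1$ forces the slope $q \cdot m_C + p \cdot l_C$ to be neither meridional nor integral, a contradiction. Hence $p$ must be even.

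I expect the main obstacle to be the slope bookkeeping at the two ends of the argument: verifying that the homeomorphism $E(K) \cap W \cong E(C)$ really does carry $h(\partial M)$ to a curve of cable slope $q/p$ on $\partial E(C)$ (so that the companion's meridian–longitude framing is correctly accounted for), and dispatching the M\"{o}bius-band output of Cannon--Feustel cleanly, since Simon's classification is stated for annuli and one must reduce to the frontier annulus while checking it is essential. The rest — applying Theorem~\ref{thm:essential} and Theorem~\ref{thm:cannonandfeustel}, and the elementary observation that a primitive $(q,p)$-curve with $|p| \geq 2$ has non-integral, non-meridional slope — is routine.
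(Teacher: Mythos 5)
Your proposal is correct and follows essentially the same route as the paper: identify $E(K)\cap W$ with the exterior of the companion knot, apply Theorem~\ref{thm:essential} to get essentiality, invoke Theorem~\ref{thm:cannonandfeustel} to obtain an essential embedded annulus or M\"{o}bius band with boundary parallel to $h(\partial M)$, and use the classification of essential annuli in a knot exterior (passing to the frontier annulus of a regular neighborhood in the M\"{o}bius-band case) to force the boundary slope to be integral, contradicting $|p|\geq 2$. The paper phrases the final contradiction as $p=\pm 1$ rather than as ``non-integral, non-meridional slope,'' but the content is identical.
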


\begin{proof}
Note that $E(K) \cap W$ is homeomorphic to $E(V)=S^3 \backslash V$, which can be viewed as the exterior of the knotted core of $V$.  Then $h(\partial M)$ represents the class $q\cdot m+p\cdot l$ in $\pi_1(\partial E(V))$, where as usual $m$ and $l$ are the homotopy classes in $\pi_1(\partial E(V))$ of the meridian and Seifert-framed longitude respectively.  Note that since $K$ is a cable (and hence a satellite knot) we must have $|p| \geq 2$.  If $p$ is even, then we are done.  Assume therefore, that $p$ is odd.  

By Theorem~\ref{thm:essential} the map $h:M \rightarrow E(V)$ is essential, and hence we can find an essential embedding $f:\Sigma \rightarrow E(V)$, where $\Sigma$ is either an annulus or M\"{o}bius band, and where $f(\partial \Sigma)$ is parallel to $h(\partial M)$ in both cases.  Furthermore, $f(\partial \Sigma)$ will represent either $(q\cdot m + p \cdot l)$ or $(2q \cdot m + 2p \cdot l)$ in $\pi_1 (\partial E(V))$, depending on whether $\Sigma$ is a M\"{o}bius band or annulus respectively.

Suppose first that $\Sigma=M$.  Then the boundary of a tubular neighborhood of $N(f(M))$ will be an embedded, essential annulus $A' \subset E(V)$, whose boundary represents $(2q \cdot m + 2p \cdot l)$ in $\pi_1 (\partial E(V))$.  Then $A'$ must be a cabling annulus for the core of $V$, which implies that $\partial A'$ represents a class of the form $(k\cdot m \pm 2 \cdot l) \in \pi_1 (\partial E(V))$.  Thus $p=\pm 1$, a contradiction.

Suppose then that $\Sigma$ is an annulus.  As above we can conclude that $f(\Sigma)$ is a cabling annulus for the core of $V$, and hence we arrive at the same contradictory conclusion, namely that $p = \pm 1$.
\end{proof}

We now turn our attention to the case when $h(M)$ cannot be arranged to lie entirely in $E(K) \cap W$.  Then $h(M)$ must intersect the cabling annulus $A \subset E(K)$ nontrivially.  We assume that $h$ is transverse to $A$, and hence $h^{-1}(A)$ will be a collection of embedded simple closed curve contained in the interior of $M$.  We thus divert our attention momentarily to discuss such curves on the M\"{o}bius band $M$.

Recall that $M$ is given by the square $[-1,1] \times [-1,1]$ in $\mathbb{R}^2$, with vertical edges identified via $(-1,t) \sim (1,-t)$.  The core of $M$ will is denoted by $c$, and is the image of $[-1,1]\times \{ 0\}$ under the quotient map.  Furthermore, let $\mu$ be the image under the quotient map of the segments $[-1,1] \times \left\{ -\tfrac{1}{2}, \tfrac{1}{2}\right\}$.  While the following result is certainly well-known, we know of no reference to it in the literature, and hence we reproduce its proof here.

\begin{lem}
\label{lem:curvesonM}
Any simple closed curve in $M$ which does not bound a disk is isotopic to either $c$ or $\mu$.
\end{lem}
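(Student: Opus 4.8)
The plan is to classify simple closed curves on the Möbius band $M$ up to isotopy by cutting $M$ along such a curve and analyzing the resulting pieces, using the fact that $M$ is a surface of very low complexity. First I would observe that $M$ has Euler characteristic $0$ and $b^1(M) = 1$, so it contains essentially no room for complicated curve systems. Let $\gamma \subset M$ be a simple closed curve that does not bound a disk. There are two cases according to whether $\gamma$ is one-sided (orientation-reversing neighborhood, i.e.\ a Möbius band regular neighborhood) or two-sided (annular regular neighborhood) in $M$.

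In the one-sided case, I would argue that $\gamma$ must be isotopic to the core $c$. Cut $M$ along $\gamma$: since $\gamma$ is one-sided, cutting along it replaces a Möbius-band neighborhood of $\gamma$ by its annular boundary, and the result $M \setminus N(\gamma)$ is a compact surface with $\chi = 0$ (cutting along a one-sided curve preserves Euler characteristic), with the same number of boundary components plus one — so $M \setminus N(\gamma)$ has two boundary circles and $\chi = 0$, hence is an annulus. This forces $M$ to be recovered from an annulus by gluing a Möbius band along one of its boundary circles, and the curve $\gamma$ is precisely the core of that Möbius band; comparing with the standard description, $\gamma$ is isotopic to $c$. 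The key subtlety is to rule out the possibility that $\gamma$ is inessential or that the annulus piece could be glued in a nonstandard way — but since $\gamma$ does not bound a disk and $M$ is determined up to homeomorphism, an ambient isotopy extension argument (using that any two one-sided simple closed curves in $M$ have homeomorphic complements, and then applying the fact that a homeomorphism of $M$ fixing the boundary is isotopic to the identity or to the single nontrivial mapping class, which still sends $c$ to something isotopic to $c$) gives the result.

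In the two-sided case, I would show $\gamma$ is isotopic to $\mu$, the boundary-parallel curve. Cutting $M$ along a two-sided $\gamma$ yields a surface with $\chi(M \setminus N(\gamma)) = \chi(M) - \chi(S^1 \times [0,1]) = 0$ but with two more boundary circles, so in total three boundary components and $\chi = 0$; the components must be an annulus and a Möbius band (since a single surface with three boundary circles would have $\chi \le -1$). If both components were annuli, $M$ would be orientable, a contradiction; if $\gamma$ separated off a disk it would bound a disk, contrary to hypothesis. So $\gamma$ splits $M$ into an annulus (one of whose boundary circles is $\partial M$) and a Möbius band; hence $\gamma$ is isotopic to $\mu$. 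Again one must check there is no exotic gluing, but the cut pieces are rigid enough (annulus and Möbius band each have essentially unique mapping class groups) that $\gamma$ is determined up to isotopy.

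The main obstacle I expect is the bookkeeping around \emph{non-separating} two-sided curves and making the "rigidity of the pieces" step fully rigorous: one has to confirm that a two-sided simple closed curve in $M$ that is essential is necessarily separating (which follows since $H_1(M;\mathbb{Z}_2) \cong \mathbb{Z}_2$ is generated by $c$, which is one-sided, so the only nonzero class has odd self-intersection and cannot be represented by a two-sided curve — forcing any two-sided curve to be null-homologous mod $2$, hence separating), and then to assemble the cut-and-paste classification into an actual isotopy rather than just a homeomorphism of complements. This is handled by the standard change-of-coordinates principle for surfaces: once $\gamma$ and $c$ (resp.\ $\gamma$ and $\mu$) are shown to have the same topological type of complement and the same one-/two-sidedness, a homeomorphism of $M$ carrying one to the other exists, and composing with the (at most two-element) mapping class group of $M$ fixing $\partial M$ shows the target curve is still isotopic to $c$ (resp.\ $\mu$).
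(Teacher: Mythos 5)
Your argument is correct in outline, but it takes a genuinely different route from the paper. The paper's proof is a direct, hands-on one: it takes the spanning arc $\delta$ (the image of $\{-1\}\times[-1,1]$), minimizes $|\alpha\cap\delta|$ by innermost-disk surgeries, lifts $\alpha$ to the square $[-1,1]\times[-1,1]$ as $n$ parallel strands, and observes that the edge identification induces an involution $\sigma\in S_n$ whose cyclic group must act transitively since $\alpha$ is connected, forcing $n\in\{1,2\}$ and hence $\alpha\simeq c$ or $\mu$. Your proof instead runs the standard cut-along-the-curve classification: split into the one-sided and two-sided cases, compute Euler characteristics and boundary-circle counts of the cut pieces to pin down the complement as an annulus (resp.\ annulus plus M\"obius band), and then invoke a change-of-coordinates/mapping-class-group argument to promote ``same cut data'' to an actual isotopy. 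The paper's approach is more elementary and produces the isotopy explicitly by straightening arcs in the square; yours is more conceptual and is the template that generalizes to curve classification on other low-complexity surfaces, at the cost of outsourcing the last step to the (near-)triviality of the mapping class group of the M\"obius band.

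That last step is the one place you should be careful. The assertion that every mapping class of $M$ (rel or not rel $\partial M$) carries $c$ to a curve isotopic to $c$ is a fact of essentially the same depth as the lemma itself, and the easiest way to see it is via a classification of essential arcs or curves on $M$ --- so there is a risk of circularity unless you cite or prove the triviality of $\mathrm{MCG}(M)$ independently (Epstein's proof goes through arcs and Alexander's trick, so it is available without circularity). Two small slips elsewhere: with three boundary circles distributed over two $\chi=0$ pieces, ``both components are annuli'' is already numerically impossible (four circles), so the orientability appeal is not the operative reason; and you should note explicitly that each component of the cut surface must abut $\partial N(\gamma)$ (else $M$ disconnects), which is what forces the M\"obius piece, not the annulus, to be the one disjoint from $\partial M$.
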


\begin{proof}
Let $\alpha \subset M$ be a simple closed curve, which we fix an orientation on.  Let $\delta \subset M$ be the image of the arc $\left\{ -1 \right\} \times [-1,1]$ under the quotient map.  Assume that $\alpha$ and $\delta$ intersect transversely, and orient $\delta$ so that the algebraic intersection $\alpha \cdot \delta$ between $\alpha$ and $\delta$ is nonnegative.  Suppose that $|\alpha \cap \delta | > \alpha \cdot \delta$.  Then we can chose an arc $\tau \subset \alpha$ such that the endpoints of $\tau$ consist of both a positive and a negative intersection point of $\alpha$ and $\delta$.  Furthermore, we can assume that there are no other intersection points with $\delta$ on the interior of $\tau$.  Then there is a subarc $\tau'$ in $\delta$, such that $\tau \cup \tau'$ bounds a disk.  After choosing the innermost such disk, we can push $\tau$ through $\delta$ to the other side, removing one pair of cancelling intersection points.  We can repeat this until all remaining intersection points between $\alpha$ and $\delta$ are positive.  If $\alpha \cap \delta = \emptyset$, then $\alpha$ lies in a disk and hence is nullhomotopic.  Assume then that $\alpha \cap \delta \neq \emptyset$.

Lift the simple closed curve $\alpha$ to $[-1,1] \times [-1,1]$, where we get a collection of $n$ properly embedded disjoint arcs $\alpha_1, \ldots, \alpha_n$, each of which has one endpoint on $\left\{-1\right\} \times [-1,1]$ and the other on $\left\{1\right\} \times [-1,1]$.  Assume that the arcs are labelled in order from top to bottom.  The identification of the vertical boundary components then induces an identification of the strands, sending the left endpoint of the $j^\text{th}$ strand to the right endpoint of the $(n-j+1)^\text{th}$ strand.  Represent this identification as an element $\sigma$ of the symmetric group $S_n$ on $n$ letters.  Notice that as a permutation $\sigma \circ \sigma = \text{id}$, however the subgroup of $S_n$ generated by $\sigma$ must act transitively on the set $\left\{ 1, \ldots, n\right\}$ as $\alpha$ is connected.  Thus $n=1$ or $2$, and hence $\alpha$ is isotopic to either $c$ or $\mu$.
\end{proof}

Returning to our map $h : M \rightarrow E(K)$, we show that all loops in $h^{-1}(A)\subset M$ can be avoided except possibly for  curves that are isotopic to $\mu$.  

\begin{lem}
\label{lem:inessentialloops}
The map $h:M\rightarrow E(K)$ can be modified away from $\partial M$ so that all curves in $h^{-1}(A)$ are isotopic to the curve $\mu$ in $M$.
\end{lem}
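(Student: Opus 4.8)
The plan is to analyze the curves of $h^{-1}(A)$ using Lemma~\ref{lem:curvesonM}, eliminating all types except $\mu$-curves by cut-and-paste arguments that exploit the essential nature of $h$ established in Theorem~\ref{thm:essential}. Since $A$ is an annulus and $h$ is transverse to $A$, the set $h^{-1}(A)$ is a disjoint collection of simple closed curves in the interior of $M$. By Lemma~\ref{lem:curvesonM}, each such curve either bounds a disk in $M$, or is isotopic to the core $c$, or is isotopic to $\mu$. I would handle these three cases in turn, reducing the number of ``bad'' curves (disk-bounding or $c$-type) at each step.

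First I would eliminate the innermost disk-bounding curves. Suppose $\gamma \subset h^{-1}(A)$ bounds a disk $D \subset M$, chosen innermost so that $\operatorname{int}(D) \cap h^{-1}(A) = \emptyset$. Then $h|_{\partial D}$ maps into $A$, and since $A$ is incompressible in $E(K)$ (being an essential annulus), $h(\partial D)$ either bounds a disk in $A$ or is isotopic to a core of $A$. In the first case, $h|_D$ and the disk in $A$ together give a sphere in $E(K)$; since $E(K)$ is irreducible (as $K$ is nontrivial), I can homotope $h$ across this sphere to push $h(D)$ off $A$, removing $\gamma$ (and possibly other curves in $D$, but by innermost choice just $\gamma$). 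In the second case, $h(\partial D)$ is homotopic in $A$ to a core, hence to the longitude $h(\partial M)$ on $\partial E(K)$; but then $h|_D : D \to E(K)$ would be a disk with essential boundary in $\partial E(K)$ after an isotopy, so the Loop Theorem contradicts nontriviality of $K$ as in the proof of Theorem~\ref{thm:essential}. Thus all disk-bounding curves can be removed.

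Next I would eliminate curves isotopic to the core $c$. Here the key observation is that such a curve $\gamma \simeq c$ in $M$ has a neighborhood $N(\gamma)$ that is itself a M\"obius band, so $M \setminus N(\gamma)$ is an annulus with one boundary on $\partial M$ and one equal to $\partial N(\gamma)$, a double cover of $\gamma$. The curve $h(\gamma)$ lies on $A$, so it is homotopic in $E(K)$ either to the core of $A$ or to a point. If $h(\gamma)$ is null-homotopic, then $h_*(c)$ is trivial, contradicting the $\pi_1$-essentiality of $h$ from Theorem~\ref{thm:essential}; if $h(\gamma)$ is homotopic to the core of $A$, one compares the homotopy class of $h(\partial M) = 2 h(c)$ against $h(\gamma)$, and the parity obstruction from the proof of Theorem~\ref{thm:essential} (namely that $a\cdot m + b\cdot l$ with $a$ or $b$ odd is not twice anything in $\pi_1(\partial E(K))$) again gives a contradiction, since a curve running once along the cabling annulus carries the class of the longitude. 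So no $c$-curves survive. After these two reductions, every remaining curve of $h^{-1}(A)$ is isotopic to $\mu$, as claimed.

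The main obstacle I anticipate is making the cut-and-paste in the disk-bounding and core cases genuinely reduce the count without introducing new intersections or leaving the category of nonorientable immersed spanning surfaces embedded near $\partial M$ — one must be careful that the modifications happen away from $\partial M$ (which the statement explicitly allows) and that an innermost-disk induction actually terminates. A secondary subtlety is the core case: because $N(\gamma)$ is one-sided, the usual ``compress along $A$'' move is not available, and one instead argues purely at the level of $\pi_1$ using the essentiality of $h$, so the argument is homotopy-theoretic rather than via embedded surgery. Verifying that $h(c)$ maps to (a power of) the core of the cabling annulus $A$, and pinning down the resulting parity contradiction, is where the bulk of the care is needed.
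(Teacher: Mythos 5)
Your proposal reaches the correct conclusion and its treatment of the disk-bounding curves is essentially the paper's: both arguments use the incompressibility of the cabling annulus $A$ to see that $h(\alpha)$ is null-homotopic in $A$ (your second subcase, where $h(\partial D)$ is a nonzero power of the core of $A$, is in fact vacuous for exactly this reason --- $h(\alpha)$ bounds the immersed disk $h(D)$, hence is null-homotopic in $E(K)$, hence in $A$ --- though your Loop Theorem disposal of it is not wrong). The difference in this step is one of care rather than substance: the paper performs an explicit regluing of $h(D)$ inside a small ball so that the modified map remains an immersion of the stated type, whereas you invoke irreducibility (really asphericity, $\pi_2(E(K))=0$) to homotope $h|_D$ off $A$; since the downstream lemmas only use that the map is proper and essential, your shortcut is acceptable, but you do need to push a collar of $\partial D$ off $A$ as well, or else $\alpha$ itself remains in $h^{-1}(A)$.

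Where you genuinely diverge is the elimination of curves isotopic to the core $c$. The paper dispatches these in one sentence: since $h$ is transverse to $A$ and $A$ is two-sided in the orientable manifold $E(K)$, every component of $h^{-1}(A)$ has trivial normal bundle in $M$, whereas $c$ is one-sided. Your algebraic argument can be made to work, but as written it has a soft spot: the ``parity obstruction in $\pi_1(\partial E(K))$'' from Theorem~\ref{thm:essential} does not apply directly, because $h(\gamma)$ lies on $A$, not on $\partial E(K)$, and $\pi_1(E(K))$ is nonabelian, so ``not a square'' is not something you can read off there. The repair is to pass to $H_1(E(K))\cong\mathbb{Z}$: writing $h(\gamma)\simeq\kappa^b$ for the core $\kappa$ of $A$, one has $[h(\partial M)]=2b[\kappa]$ and also $[h(\partial M)]=[\kappa]=pq\neq 0$, forcing $2b=1$. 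This is precisely the computation the paper carries out later in Lemma~\ref{lem:boundaryparallelintersections} for the curve $\mu_0$, so your route is viable, just longer and in need of that homological precision; the orientability argument is the cleaner way to kill the $c$-curves.
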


\begin{proof}
We first note that none of the simple closed curves in $h^{-1}(A)$ can be isotopic to the core $c$ of $M$, since $A$ is orientable and $c$ is an orientation-reversing curve in $M$.

Next we show that we can modify $h$ away from $\partial M$ so that $h^{-1}(A)$ contains no inessential curves.  Let $\alpha \subset h^{-1}(A)$ be a simple closed curve which bounds a disk $D$ in $M$, and assume that $D$ contains no other such curves.  Then $h(D)$ will be an immersed disk which lies in the closure of one of the two components of $E(K) \backslash A$, which we denote by $U$, and whose boundary $h(\alpha)$ is an immersed loop on $A$.  Since $A$ is essential, the immersed loop $h(\alpha)$ will be null-homotopic on $A$.

Pick a homotopy which takes $h(\alpha)$ to a small disk $D'$ in $A$, and extend it to a homotopy of $h$ supported in small neighborhoods of $\alpha$ and $A$, so that the double point curve along $h(\alpha)$ now lies in $D' \subset A$.

Let $N(D')$ be the restriction to $D'$ of a small tubular neighborhood of $A$, which we can parametrize in the usual way as $N(D') = D' \times (-1,1)$.  Then $h(D)$ sits entirely on one side of $D' = D' \times \left\{0\right\}$, so we can assume without loss of generality that $h(D) \cap \left(D' \times (-1,0)\right) = \emptyset$.  

Then $h(D)$ can be thought of as a properly immersed disk in the ball $B = S^3 \backslash (\operatorname{int} D' \times (-1,0))$.  Meanwhile, the surface $h(M\backslash \operatorname{int} D)$ will have one boundary component immersed along $h(\alpha)$, which can be pushed slightly off of $A$ into the interior of $E(K) \backslash U$.  The disk $h(D) \subset B$ can be reglued to the newly repositioned boundary of $h(M\backslash \operatorname{int} D)$, and by shrinking the ball $B$ down sufficiently small we can assume that it is contained entirely in the interior of $E(K) \backslash U$.  The resulting immersion will have one less inessential loop intersection with the annulus $A$.  By removing all such inessential loop intersections, we are left with only with loops in $h^{-1}(A)$ that are isotopic to $\mu \subset M$.
\end{proof}

We thus can assume that $h^{-1}(A)$ consists only of a finite collection of parallel curves $\mu_0, \ldots , \mu_k$, all of which are isotopic to $\mu \subset M$.  Suppose that $\mu_0$ is the innermost of the curves in $h^{-1}(A)$. 

\begin{lem}
\label{lem:boundaryparallelintersections}
The curve $h(\mu_0)$ is homotopic to the core $\kappa$ of the cabling annulus $A$.
\end{lem}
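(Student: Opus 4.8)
The plan is to analyze the piece of $M$ lying "outside" the innermost intersection curve $\mu_0$, i.e.\ the portion cut off by $\mu_0$ that contains $\partial M$. Since $\mu_0$ is isotopic to $\mu$, cutting $M$ along $\mu_0$ yields two pieces: an annulus $M_{\partial}$ containing $\partial M$, and a M\"{o}bius band $M_c$ containing the core $c$. (This follows from Lemma~\ref{lem:curvesonM}: a curve isotopic to $\mu$ separates $M$ into an annulus and a M\"{o}bius band.) Because $\mu_0$ is innermost, $h$ maps the interior of $M_c$ into the complement of $A$, so $h(M_c)$ lies entirely in the closure of one of the two components of $E(K) \setminus A$; since the core $c \subset M_c$ is orientation-reversing, $h(M_c)$ must lie in the component $V \cap E(K)$ whose core is the knotted core $\kappa$ of $V$ (the other side $W$ is a knot exterior in which, by Lemma~\ref{lem:outside}'s argument, an orientation-reversing immersed M\"{o}bius band would force $p$ even — but more directly, $h(M_c)$ restricted to the M\"{o}bius band $M_c$ with boundary $h(\mu_0) \subset A$ gives an immersed M\"{o}bius band in that side).

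First I would observe that $h|_{M_c} : M_c \to \overline{V \cap E(K)}$ is an immersed M\"{o}bius band whose boundary $h(\mu_0)$ lies on the cabling annulus $A$, and that $\overline{V \cap E(K)}$ is homeomorphic to the solid torus $V$ (or rather $V$ minus a neighborhood of $K$, but $K$ lies on $\partial V$, so this is just $V$ with a collar removed, hence still a solid torus). Since $\pi_1(\partial N(c))$ maps trivially under the same Loop-Theorem / null-homotopy argument used in the proof of Theorem~\ref{thm:essential}, the doubled core $h(\partial N(c))$ — which is homotopic to $h(c)^2$ — is either trivial or a nontrivial power in $\pi_1(V) \cong \mathbb{Z}$. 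Running the essentiality argument of Theorem~\ref{thm:essential} (applied now to the M\"{o}bius band $M_c$ mapping into the solid torus $\overline{V\cap E(K)}$, whose core is the unknotted-in-$V$ generator), $h(c)$ must generate $\pi_1(V) \cong \mathbb{Z}$, so $h(c)$ is homotopic to the core of $V$ and $h(\mu_0)$, being homotopic to $2 h(c)$ on the M\"{o}bius band side but to a curve on $A$, must wrap twice around the core of $V$; combined with the fact that $h(\mu_0) \subset A$ and $A$ is the cabling annulus whose core $\kappa$ is exactly that core, we get $h(\mu_0) \simeq \kappa$.

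The cleanest route is probably: apply Theorem~\ref{thm:cannonandfeustel} to $h|_{M_c}$ after doubling it to an annulus map (exactly as in the proof of Lemma~\ref{lem:cableortorus}), obtaining an essential embedded annulus or M\"{o}bius band in $\overline{V\cap E(K)}$ with boundary parallel to $h(\mu_0)$ on $\partial(V \cap E(K)) \cup A$; an essential embedded annulus or M\"{o}bius band in a solid torus has core a longitude, forcing $h(\mu_0)$ (up to the factor of $2$ coming from the M\"{o}bius double cover, absorbed because $A$'s core already represents the class in question) to be isotopic on $A$ to $\kappa$. The main obstacle I anticipate is the bookkeeping of \emph{which} side of $A$ the band $h(M_c)$ lands on and ensuring the relevant model solid torus really is $V$ (and not, say, a neighborhood issue from $K$ sitting on $\partial V$), together with correctly matching the "factor of two" between the M\"{o}bius-band boundary class $2 h(c)$ and the annulus core $\kappa$; once the side is pinned down and $\overline{V\cap E(K)}$ is identified with a genuine solid torus whose core is $\kappa$, the essentiality + Cannon--Feustel machinery forces the conclusion.
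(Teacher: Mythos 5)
There is a genuine gap: you are working with the wrong piece of $M$, and the piece you chose cannot determine the multiplicity. Write $h(\mu_0)\simeq\kappa^b$ in $\pi_1(A)\cong\mathbb{Z}$ (nontriviality of $b$ does follow from $\pi_1$-essentiality of $h$, as in the paper). The paper pins down $b=1$ using the \emph{annulus} component $A_0$ of $M$ cut along $\mu_0$ --- the piece containing $\partial M$ --- which provides a free homotopy in $E(K)$ from $h(\partial M)$ to $h(\mu_0)$. Hence $[h(\partial M)]=b[\kappa]$ in $H_1(E(K))\cong\mathbb{Z}$; since $h(\partial M)$ is parallel to $\partial A$, whose slope is $pq\neq 0$, the class $[\kappa]$ is nonzero and also equals $[h(\partial M)]$, forcing $b=1$. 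Your proposal instead analyzes the inner M\"{o}bius band $M_c$. Even granting everything you assert there, the most you can extract is that $h(\mu_0)$ is freely homotopic to $h(c)^2$, i.e.\ an \emph{even} power $x^{2k}$ of the core $x$ of whichever solid-torus side it lands in; since $\kappa$ itself represents $x^{p}$ on that side, you only obtain $pb=2k$, which does not force $b=1$. The information that singles out $b=1$ lives entirely in the outer annulus and the known boundary slope $pq$ of $h(\partial M)$, which your argument never uses.

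Several supporting claims are also incorrect or circular. The assertion that $h(M_c)$ must lie in $V\cap E(K)$ because $c$ is orientation-reversing does not follow --- both complementary pieces of $A$ are orientable $3$-manifolds, and the paper's proof of Proposition~\ref{prop:cableknot} explicitly treats both the case $h_0(M_0)\subset E(K)\cap V$ and the case $h_0(M_0)\subset E(K)\cap W$. (You also conflate $\kappa$, the core of the cabling annulus, with the core of the solid torus $V$.) Invoking Theorem~\ref{thm:essential} for $h|_{M_c}$ as a map into one side of $A$ is circular: that theorem requires knowing the slope of the boundary curve on the boundary torus, which is exactly what this lemma is establishing. Finally, $\pi_1$-injectivity into a solid torus only gives that $h_*(c)$ is a \emph{nonzero} power of the core, not a generator, and an essential embedded annulus in a solid torus generally has core freely homotopic to $x^{p}$ with $|p|\geq 2$, not to a longitude, so the concluding Cannon--Feustel step does not deliver the claimed conclusion.
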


\begin{proof}
We first note that since $\mu_0$ is nontrivial in $\pi_1(M)$ and $h$ is essential, $h(\mu_0)$ will be nontrivial in $\pi_1(A)$.  Thus $h(\mu_0)$ is homotopic to some nonzero power of $\kappa$, say $\kappa^b$, with $b\neq 0$.

Notice now that $h(\partial M)$ is homotopic in $\partial E(K)$ to either of the components of $\partial A \subset \partial E(K)$, both of which are in turn homotopic in $A$ to $\kappa$.  Hence $h(\partial M)$ is homotopic to $\kappa$ in $\pi_1(E(K))$.

On the other hand, $\partial M$ and $\mu_0$ bound an annulus $A_0$ in $M$, and hence $h(\partial M)$ and $h(\mu_0)$ are freely homotopic in $E(K)$ via $h(A_0)$.  This implies that $\left[ h(\partial M) \right]=\left[ h(\mu_0) \right]$ in $H_1(E(K)) \cong \mathbb{Z}$, and hence that $\left[\kappa\right] = b\left[ \kappa \right]$ in homology.  Because the slopes of $\partial A$ and $h(\partial M)$ in $\partial E(K)$ agree, we can compute the boundary slope of $h(\partial M)$ with respect to the Seifert-framed longitude of $K$, to see that its framing coefficient is $pq \neq 0$.  Hence we see that $[h(\partial M)]=[\kappa]$ is nonzero in $H_1(E(K))$.  Thus $b=1$, which completes the proof.
\end{proof}

\begin{proof}[Proof of Proposition~\ref{prop:cableknot}]
Let $M_0$ denote the subsurface of $M$ bounded by $\mu_0$, which will also be a M\"{o}bius band.  Let $h_0:M_0 \rightarrow E(K)$ denote the restriction of $h$ to $M_0$.  Note that $h_0 (\partial M_0)$ will be an immersed curve in the cabling annulus $A$ which is homotopic to the core $\kappa$.  Furthermore, as $\mu_0$ was the innermost curve in $h^{-1}(A)$, $h_0(M_0)$ will be contained entirely inside either $E(K) \cap V$ or $E(K) \cap W$.

Choose a homotopy of $h_0$ which is supported in a small neighborhood of $\partial M_0$, and which first straightens out $h_0(\partial M_0)$ to the embedded core $\kappa$, and then pushes it along $A$ towards one of its boundary components, and finally onto $\partial E(K)$.  If $h_0(M_0) \subset E(K) \cap V$, then we push $h_0(\partial M_0)$ onto $\partial E(K) \cap \operatorname{int} V$, while if $h_0(M_0) \subset E(K) \cap W$ then we push $h_0(\partial M_0)$ onto $\partial E(K) \cap \operatorname{int} W$.

In the latter case, we obtain a proper map $h_0:M_0 \rightarrow E(K)$ whose image is contained entirely outside of $E(K) \cap V$.  Moreover, $h_0$ is essential by Theorem~\ref{thm:essential}, and hence by Lemma~\ref{lem:outside} it follows that $p$ must be even.

Suppose then that $h_0 (M_0) \subset E(K) \cap V$.  Take the solid torus $E(K) \cap V \cong V$, and perform an inverse satellite operation, embedding it in $S^3$ as the standardly embedded solid torus $V'$.  In doing so we chose this embedding so that the longitude coming from the Seifert framing on $V$ is identified with the longitude from the Seifert framing on $V'$, though this will not be necessary.  Using this choice of identification we obtain a map $h':M_0 \rightarrow V'$, with $h'(\partial M_0)$ embedded on $\partial V'$ as a $(p,q)$-torus knot $K'$.  Thinking instead of $h'$ as a map to $E(K')$, it follows immediately that $h'$ is essential.

Now, suppose that $p$ is odd. If we cut the torus $V'$ and apply $k$ twists before regluing, we obtain the solid torus $V'$ again, while the knot $K'$ is transformed into a $(p,q+kp)$-torus knot $K''$ lying on $\partial V'$.  As $p$ is odd, we can choose $k$ so that $q+kp$ is also odd.  However, since the the image of $h'$ is contained entirely in $V'$, we can twist the map $h'$ as well to obtain a new essential map $h'':M_0 \rightarrow E(K'')$, with $h''(\partial M_0)$ a longitude on $\partial E(K'')$.  By Lemma~\ref{lem:torusknot} then either $p$ or $q+kp$ must be even, which is a contradiction.  Hence $p$ must be even.
\end{proof}

\section{Representing $\mathbb{Z}_2$-homology classes with immersed surfaces}
\label{sec:homology}

\begin{proof}[Proof of Theorem~\ref{thm:homology}]
Fix $n \geq 2$, and let $K = T(2n,2n-1)$.  We think of $K$ as being embedded on the boundary of the standard solid torus $V' \subset S^3$ so that the meridian disks of $V'$ intersect $K$ in $2n$ points.  Let $V = V' \cap E(K)$, and $W = E(K) \backslash \operatorname{int} V$, with $A = W \cap V$.  Thus $A$ is the essential cabling annulus for $K$ in $E(K)$, and separates $E(K)$ into two solid torus components $V$ and $W$.   Let $F$ denote an immersed M\"{o}bius band in $V$ constructed as in Proposition~\ref{prop:crosscap1}.

Define $Y_n$ to be the result of Dehn surgery to $S^3$ along $K$ with surgery slope determined by $\partial A \subset E(K)$.  Identifying $E(K)$ as a subspace of $Y_n$ allows us to think of $V,W,A,$ and $F$ as subsets of $Y_n$.  Let $U$ denote the solid torus that is glued to $\partial E(K)$ during the Dehn surgery.

As $\partial F \subset \partial E(K)$ is parallel to the surgery slope, $\partial F$ bounds an embedded disk $D$ in $U$.  Then $F \cup D$ is an immersed $\mathbb{RP}^2$ in $Y_n$, and we denote its $\mathbb{Z}_2$--homology class in $H_2(Y_n; \mathbb{Z}_2)$ by $\alpha_n$.

\begin{figure}
\includegraphics[width=0.4\textwidth]{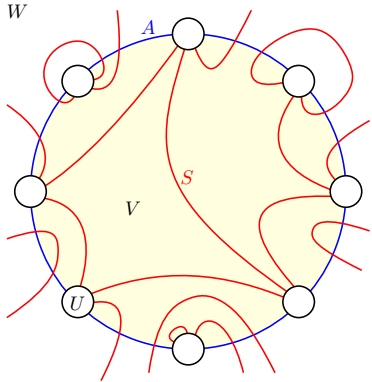}
\caption{A meridian disk of the torus $V$ with an embedded surface $S \subset Y_n$ representing $\alpha_n$.}
\label{fig:RPcolor}
\end{figure}

Suppose now that $\bar{S} \subset Y_n$ is an embedded surface with $\left[\bar{S}\right] = \alpha_n$ in $H_2 (Y_n; \mathbb{Z}_2)$.  After isotopy of $\bar{S}$ if necessary, we can assume that $\bar{S}$ intersects $U$ in a collection of meridian disks $D_1, \ldots, D_m$.  The boundary of these meridian disks will form a collection of longitudes on the torus $\partial E(K) = \partial U$, each of which will be parallel to the surgery slope $\partial A$.  After further isotopy of $\bar{S}$ we can assume that each of these longitudes lies on $\partial E(K) \cap V$ as in Figure~\ref{fig:RPcolor}.  Note that because $\bar{S}$ is homologous to $F \cup D$, which intersects the core of $U$ in a single point, the number $m$ of disks $D_j$ must be odd. Let $S$ be a component of $\bar{S}$ which also intersects $\partial E(K)$ in an odd number of curves.  

We now remove any trivial intersections of $S$ with $A$.  Let $c$ be a component of $A \cap S$ which bounds a disk in $A$, and suppose that $c$ is innermost.  Then we can cut the surface $S$ along $c$ and cap it off with parallel disks on either side of $A$ to remove the intersection loop $c$.  We repeat this procedure for all such intersection loops, always choosing the inner most loops to cut along. If at any stage in this procedure $S$ becomes disconnected, we may discard the piece whose intersection with $\partial E(K)$ has an even number of components.  As these modifications do not decrease the Euler characteristic of $S$, we can thus assume that $S$ is connected and that the components of $A \cap S$ are embedded curves in $A$ that are all parallel to the core of $A$.

\begin{lem}
\label{lem:oddintersections}
The surface $S$ intersects the annulus $A$ in an even number of components.
\end{lem}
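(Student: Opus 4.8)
The plan is to cut $S$ along the curves of $A\cap S$, sort the resulting pieces according to whether they lie in the solid torus $V$ or in the solid torus $W$, and then read off the desired parity from the homology of $W$. First I would recall the facts already established: $A$ is a two-sided properly embedded annulus separating $E(K)$ into the solid tori $V$ and $W$; the components of $A\cap S$ are disjoint simple closed curves in the interior of $S$, each parallel to the core $\kappa$ of $A$; and $\partial S$ lies in the interior of the annulus $\partial E(K)\cap V$. Cutting $S$ along $A\cap S$ (which we may take to be transverse) then produces a finite collection of compact subsurfaces, each contained in $\overline{V}$ or in $\overline{W}$; call these the $V$-pieces and $W$-pieces. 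Since $\partial S$ is disjoint from $\overline{W}$, every boundary curve of a $W$-piece lies on $A$, hence is one of the curves of $A\cap S$. Moreover each curve $c$ of $A\cap S$ inherits an annular neighbourhood in $S$ from a collar of $A$ in $E(K)$, with one side of $c$ lying in $V$ and the other in $W$, so $c$ is a boundary curve of exactly one $W$-piece. Consequently
\[
|A\cap S| \;=\; \sum_{P}\,|\partial P|,
\]
the sum running over all $W$-pieces $P$.

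It then remains to show that $|\partial P|$ is even for every $W$-piece $P$. Here every component of $\partial P$ lies on $A\subset\partial W$ and is parallel to $\kappa$, so $[\partial P]=|\partial P|\cdot[\kappa]$ in $H_1(W;\mathbb{Z}_2)$; since $P$ is a surface in $W$ with $\partial P\subset\partial W$, its boundary is null-homologous, giving $|\partial P|\cdot[\kappa]=0$ in $H_1(W;\mathbb{Z}_2)\cong\mathbb{Z}_2$. The crux is that $[\kappa]\neq 0$ in $H_1(W;\mathbb{Z}_2)$: as $\kappa$ is isotopic on $\partial V'$ to the torus knot $K=T(2n,2n-1)$, it winds $2n-1$ times around the core of the standard complementary solid torus $W$ (equivalently, it meets a meridian disk of $W$ in $2n-1$ points, since a meridian of $W$ is a longitude of the unknotted $V'$), and $2n-1$ is odd. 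Hence $[\kappa]$ generates $H_1(W;\mathbb{Z}_2)$, so $|\partial P|$ must be even, and summing over the $W$-pieces shows $|A\cap S|$ is even.

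The routine part is the cut-and-paste bookkeeping along $A$; the step I expect to be the real content is the homological computation in the last paragraph. It uses in an essential way both that $W$ is the \emph{standard} complementary solid torus of $V'$ (so that a meridian of $W$ is a longitude of $V'$, and in particular $W$ really is a solid torus) and that the two parameters of $K$ have opposite parity, the meridional winding of $K$ relative to $V'$ being the odd one — this is precisely where the choice $K=T(2n,2n-1)$ matters. A minor point to check along the way is that the $W$-pieces genuinely have all of their boundary on $A$, which is why one first arranges $\partial S$ to lie in $\partial E(K)\cap V$ and pushed off $\partial A$.
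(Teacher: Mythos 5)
Your proof is correct, but it takes a genuinely different route from the paper's. The paper argues by contradiction: it glues the odd collection of boundary curves of $S_W = S\cap W$ in pairs to get a surface $S_0\subset W$ spanning a single curve parallel to $\kappa$, re-embeds $W$ in $S^3$ with $p$ twists so that this curve becomes a $(2n-1,2n+p(2n-1))$-torus knot, and then contradicts the fixed Euler characteristic of $S_0$ using the growth of the Seifert genus and of Teragaito's crosscap number as $p\to\infty$. You instead observe that $S_W$ is a properly embedded surface in the solid torus $W$ with all of its boundary on $A$, so $[\partial S_W]=|A\cap S|\cdot[\kappa]$ must vanish in $H_1(W;\mathbb{Z}_2)\cong\mathbb{Z}_2$, while $[\kappa]$ is nonzero there because $\kappa$ is parallel to $T(2n,2n-1)$ and hence has odd winding number $2n-1$ in $W$. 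This is substantially more elementary: it needs no twisting construction and no input from genus or crosscap-number computations, only the $\mathbb{Z}_2$-fundamental-class fact that the boundary of a compact (possibly nonorientable) surface is null-homologous mod $2$, which is why your argument also handles nonorientable pieces without case analysis. Your identification of where the hypotheses enter is also accurate: the argument uses that $\partial S$ has been isotoped into $\partial E(K)\cap V$ (so that $\partial S_W\subset A$) and that the meridional winding $2n-1$ of $K$ about $V'$ is odd; note that the analogous parity argument would give nothing for the companion statement inside $V$ (Lemma~\ref{lem:orientablesubsurface}), where the winding number $2n$ is even and the paper's twisting-and-genus-growth technique is genuinely needed.
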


\begin{proof}
Suppose to the contrary that there are an odd number of components $l_1, \ldots , l_k$ of $A \cap S$.  Let $S_W = S \cap W$.  Then $S_W$ is a properly embedded surface in $W$, with an odd number of boundary components which are parallel on $\partial W$.  By pushing all  but one of the loops $l_j$ into the interior of $W$ in neighboring pairs  and gluing them together, we obtain a properly embedded surface $S_0 \subset W$ with a single remaining boundary component $l$ on $\partial W$ that is parallel to the core of $A$.  Ignoring any closed components we can assume that $S_0$ is connected.

Choose an embedding of $f: W \rightarrow S^3$ so that $f(W)$ is the standard torus in $S^3$, and so that $f(l)$ is the $(2n-1, 2n)$-torus knot sitting on $\partial f(W)$.  Then $f(S_0)$ will either be a Seifert surface or nonorientable spanning surface for $f(l)$, depending on whether it is orientable or not, and will be contained entirely in $f(W)$.

For any even integer $p\geq 0$, let $f_p : W \rightarrow S^3$ be the embedding obtained by cutting $f(W)$ along a meridian disk, and applying $p$ full twists before regluing.  Then $f_p(l)$ will be a $(2n-1,2n+p(2n-1))$-torus knot, with $f_p(S_0)$ a Seifert or nonorientable spanning surface for $f_p(l)$.

The Seifert genus of the $(2n-1,2n+p(2n-1))$-torus knot is given by
\[
g_3(f_p(l)) = (n-1)(2n-1)(1+p),
\]
while by \cite{teragaito2004crosscap} its crosscap number is 
\[
\gamma_3 (f_p(l)) = \tfrac{1}{2}(p+2n).
\]
Note however that we have a Seifert or nonorientable spanning surface $f_p(S_0)$ for $f_p(l)$ with $\chi (f_p(S_0)) = \chi (S_0)$.  Choosing $p$ sufficiently large then yields a contradiction.
\end{proof}

Consider now the surface $S_V = S \cap V$.  By Lemma~\ref{lem:oddintersections} the surface $S_V$ will have an even number of boundary components on $A$, an odd number on $\partial E(K)$, and hence an odd number in total.  Let $S_1 \subset V$ be a connected component of $S_V$ which also has an odd number of boundary components, which we denote by $c_1, \ldots, c_d$.  Observe that the components of $S \backslash S_1$ will all be surfaces with boundary, none of which can be disks.  Indeed, when thought of a surface in $S^3$, each boundary component of $S \backslash S_1$ is a parallel copy of the non-trivial knot $K = T(2n,2n-1)$.  Thus $\chi(S \backslash S_1)\leq 0$, and we necessarily have $\chi(S_1) \geq \chi(S)$.

As in the proof of Lemma~\ref{lem:oddintersections}, we may push all but one of the parallel boundary components $c_j$ into $V$, where neighboring strands can be paired off and glued together to give a properly embedded surface $S_2 \subset V$ with only a single boundary component.

\begin{lem}
\label{lem:orientablesubsurface}
The surface $S_2$ is nonorientable.
\end{lem}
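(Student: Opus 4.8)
The plan is to derive a contradiction from a homological obstruction living in $H_1(V;\mathbb{Z})$. First I would record the elementary fact that if a compact \emph{orientable} surface $\Sigma$ is properly embedded in a compact $3$-manifold $N$, then $\partial\Sigma$ is null-homologous in $N$: the relative fundamental class $[\Sigma]\in H_2(N,\partial N;\mathbb{Z})$ satisfies $\partial[\Sigma]=[\partial\Sigma]$, and the composite $H_2(N,\partial N;\mathbb{Z})\xrightarrow{\ \partial\ }H_1(\partial N;\mathbb{Z})\to H_1(N;\mathbb{Z})$ vanishes by exactness of the long exact sequence of the pair. Taking $N=V$: if $S_2$ were orientable, then (applying this to the component of $S_2$ meeting $\partial V$, though $S_2$ is in fact connected) we would have $[\partial S_2]=0$ in $H_1(V;\mathbb{Z})$.

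It therefore suffices to show that $[\partial S_2]\ne 0$ in $H_1(V;\mathbb{Z})$. By construction $S_2$ has exactly one boundary component, and this curve is one of the boundary components $c_j$ of $S_V$; after the earlier cleanup all of these are parallel in $\partial V$ to the core of the cabling annulus $A$ (equivalently, to the surgery slope). I would then reuse the restandardization from the proof of Proposition~\ref{prop:cableknot}, applied now to $\partial S_2$ in place of $\partial M_0$: performing the inverse satellite operation produces a homeomorphism of the solid torus $V$ onto a standardly embedded solid torus $\widetilde V$ carrying $\partial S_2$ to a $(2n,q')$-torus knot on $\partial\widetilde V$ for some integer $q'$. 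Since a $(2n,q')$-curve meets a meridian disk of $\widetilde V$ in $2n$ points, it represents $\pm 2n$ in $H_1(\widetilde V;\mathbb{Z})\cong\mathbb{Z}$; as the homeomorphism induces an isomorphism on $H_1$, we conclude $[\partial S_2]=\pm 2n\ne 0$ in $H_1(V;\mathbb{Z})$, contradicting the previous paragraph. Hence $S_2$ is nonorientable.

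I expect the only genuine content to be the second step, namely checking that $\partial S_2$ is homologically essential in $V$. The relevant input is that $K=T(2n,2n-1)$ meets each meridian disk of $V'$ in $2n$ points, so the core of the cabling annulus — and any curve parallel to it on $\partial V$ — has winding number $2n$ in $V$; this is exactly what the restandardization of Proposition~\ref{prop:cableknot} records, and it applies verbatim to $c_1$, so no new argument is needed. It is worth emphasizing that the specific way $S_2$ was produced (tubing together parallel boundary components of $S_1$, as in the proof of Lemma~\ref{lem:oddintersections}) plays no role: all that is used is that $S_2$ is a properly embedded surface in $V$ with exactly one boundary component, and that this boundary component is parallel to the core of $A$. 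One can also observe more crudely that $\bar S$ must be nonorientable somewhere, since $\alpha_n\ne 0$ while $H_2(Y_n;\mathbb{Z})=0$; but that global remark does not by itself pin the nonorientability to the piece $S_2$, which is why the argument is localized to the solid torus $V$.
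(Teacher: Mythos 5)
Your proof is correct, but it takes a genuinely different and more elementary route than the paper. The paper rules out orientability of $S_2$ the same way it handles Lemma~\ref{lem:oddintersections}: it re-embeds $V$ in $S^3$ with $p$ meridional twists, observes that $h_p(S_2)$ would be a Seifert surface for the $(2n,2n-1+2pn)$-torus knot of fixed Euler characteristic, and lets the Seifert genus formula blow up as $p\to\infty$. You instead use the purely homological obstruction: a properly embedded \emph{orientable} surface in a compact $3$-manifold has null-homologous boundary (exactness of $H_2(V,\partial V)\to H_1(\partial V)\to H_1(V)$), whereas $\partial S_2$ is parallel in $\partial V$ to the core of the cabling annulus and hence represents $\pm 2n\neq 0$ in $H_1(V)\cong\mathbb{Z}$. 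This is sound, shorter, and avoids any appeal to torus-knot genus formulas for this step; note that the analogous shortcut would \emph{not} suffice for Lemma~\ref{lem:oddintersections}, where nonorientable spanning surfaces must also be excluded, which is presumably why the paper reuses the twisting argument here for uniformity. One stylistic remark: your appeal to the ``inverse satellite'' restandardization from Proposition~\ref{prop:cableknot} is unnecessary in this setting, since $V\subset V'$ with $V'$ already standardly embedded — the winding number $2n$ of $\partial S_2$ is immediate from the fact that meridian disks of $V'$ meet $K$ in $2n$ points. Your closing observation that $H_2(Y_n;\mathbb{Z})=0$ only shows $\bar S$ is nonorientable globally is apt; localizing to $V$ as you do is indeed what is needed.
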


\begin{proof}
Suppose to the contrary that $S_2$ is orientable.  Then as in the proof of Lemma~\ref{lem:oddintersections} we can choose an embedding $h_p : V \rightarrow S^3$ so that the image of $V$ is the solid torus with $p$ additional twists given along a meridian disk.  Then $h_p(\partial S_2)$ is a $(2n,2n-1+2pn)$-torus knot, which has a Seifert surface $h_p(S_2)$, but has Seifert genus
\[
g_3(h_p(\partial(S_2)) = (2n-1)(n-1+pn).
\]
Taking $p$ sufficiently large again yields the desired contradiction.
\end{proof}

As the nonorientable surface $S_2$ was constructed by identifying neighboring boundary components of the connected surface $S_1$,  if $d \geq 3$ we can find a single pair of neighboring boundary components $c_j$ and $c_{j+1}$ of $S_1$ which, when glued together, yields a nonorientable surface $S_1' \subset V$ from the (possibly orientable) surface $S_1$.  When $d=1$ we simply set $S_1' = S_1=S_2$. In both cases we have that $\chi(S_1') = \chi(S_1) \geq \chi(S)$.

Now take a second copy of the solid torus $V'' = S^1 \times D^2$, and glue its boundary to $\partial V$, so that a meridian of $V''$ is sent to one of the boundary components $c_k$ of $S_1'$ in $\partial V$.  The resulting 3-manifold is the lens space $L(2n,2n-1)$.  Each of the components of $\partial S_1'$ bound a disk inside $L(2n,2n-1)$, and hence we can cap each of them off with disjoint disks to obtain a closed surface $S' \subset L(2n,2n-1)$.  If $d=1$ we have $\chi (S') = \chi (S_1') + 1 \geq \chi(S)+1$, while when $d \geq 3$ we have 
\[
\chi (S') = \chi (S_1') + d-2 \geq \chi(S) +1.
\]
Note however, that Bredon and Wood \cite{bredon1969non} showed that the maximal Euler characteristic of a closed, connected, nonorientable surface embedded in $L(2n,2n-1)$ is $2-n$.  Hence in both cases we have
\[
2-n \geq \chi(S') \geq \chi(S) +1,
\]
from which the theorem follows.
\end{proof}

\bibliographystyle{plain}
\bibliography{bibliography}

\end{document}